\documentclass[12pt,english,a4paper]{amsart}

\input xy
\xyoption{all}

\usepackage{fullpage}
\usepackage{amssymb}
\usepackage{hyperref}
\usepackage{color} 

\newcommand{\Cc}{\mathbb{C}} 
\newcommand{\Hh}{\mathbb{H}} 
\newcommand{\Pp}{\mathbb{P}}
\newcommand{\Rr}{\mathbb{R}}
\newcommand{\Nn}{\mathbb{N}}
\newcommand{\Zz}{\mathbb{Z}}
\newcommand{\Qq}{\mathbb{Q}} 
\newcommand{\Ff}{\mathbb{F}} 

{\theoremstyle{plain}
\newtheorem{theorem}{Theorem}[section]   
\newtheorem{lemma}[theorem]{Lemma}       
\newtheorem{corollary}[theorem]{Corollary}   
\newtheorem{condition}[theorem]{Condition}  
         
}
{\theoremstyle{remark}
\newtheorem{definition}[theorem]{Definition}      
\newtheorem{remark}[theorem]{Remark}   
}

\def\Gal{\hbox{\rm Gal}}
\def\lhfl#1#2{\smash{\mathop{\hbox to 12mm{\leftarrowfill}}
\limits^{#1}_{#2}}}
\def\rhfl#1#2{\smash{\mathop{\hbox to 12mm{\rightarrowfill}}
\limits^{#1}_{#2}}}
\def\G-{regularly }

\DeclareMathOperator\PGL{{\rm{PGL}}}

\newcommand{\mP}{\mathbb{P}}
\newcommand{\ra}{\rightarrow}
\newcommand\divides{\mid}
\newcommand\sH{{\sf H}}
\newcommand\bC{{\bf C}}
\newcommand{\mZ}{\mathbb{Z}} 
\newcommand{\nn}{{\mathbb{N}}}  
\newcommand{\qq}{{\mathbb{Q}}}   
\newcommand{\zz}{{\mathbb{Z}}}
\newcommand{\HGlerok}{{\sf H}_{G,\le r_0}(k)} 
\newcommand{\HGrCk}{{\sf H}_{G,r}({\bf C})(k)} 
\newcommand{\HGRCk}{{\sf H}_{G,R}({\bf C})(k)}
\newcommand{\HGlero}{{\sf H}_{G,\le r_0}} 
\newcommand{\HGrC}{{\sf H}_{G,r}({\bf C})} 
\newcommand{\HGRC}{{\sf H}_{G,R}({\bf C})}

\begin{document}

\title[Rational pullbacks of Galois covers]{Rational pullbacks of Galois covers}

\author{Pierre D\`ebes}
\email{pierre.debes@univ-lille.fr}

\author{Joachim K\"onig}
\email{jkoenig@knue.ac.kr}

\author{Fran\c cois Legrand}
\email{francois.legrand@tu-dresden.de}

\author{Danny Neftin}
\email{dneftin@technion.ac.il}

\address{Laboratoire de Math\'ematiques Paul Painlev\'e, Universit\'e de Lille, 59655 Villeneuve d'Ascq Cedex, France}

\address{Department of Mathematics Education, Korea National University of Education, 28173 Cheongju, South Korea}

\address{Institut f\"ur Algebra, Fachrichtung Mathematik, TU Dresden, 01062 Dresden, Germany}

\address{Department of Mathematics, Technion, Israel Institute of Technology, Haifa 32000, Israel}

\subjclass[2010]{Primary 12F12, 11R58, 14E20; Sec. 14E22, 12E30, 11Gxx}

\keywords{Galois covers, rational pullback, inverse Galois theory} 

\thanks{{\it Acknowledgment}. This work was supported in part by the Labex CEMPI  (ANR-11-LABX-0007-01) and ISF grants No. 577/15 and No. 696/13. }

\begin{abstract} 
The finite subgroups of ${\rm PGL}_2(\Cc)$ are shown to be the only finite groups $G$ with this property: for some integer $r_0$ (depending on $G$), all Galois covers $X\rightarrow \Pp^1_\Cc$ of group $G$ can be obtained by pulling back those with at most $r_0$ branch points along \hbox{non-constant} rational maps $\Pp^1_\Cc \rightarrow \Pp^1_\Cc$. For $G\subset {\rm PGL}_2(\Cc)$, it is in fact enough to pull back one well-chosen cover with at most $3$ branch points. A consequence of the converse for inverse Galois theory  is that, for $G\not \subset {\rm PGL}_2(\Cc)$, letting the branch point number grow provides truly new Galois realizations $F/\Cc(T)$ of $G$. Another application is that the ``Beckmann--Black'' property that ``any two Galois covers of $\Pp^1_\Cc$ with the same group $G$ are always pullbacks of another Galois cover of group $G$'' only holds if $G\subset {\rm PGL}_2(\Cc)$.
\end{abstract}

\maketitle

\vspace{-2mm}

\section{Introduction} \label{sec:intro}
Suppose $f:X\rightarrow \Pp^1_k$ is a {\it $k$-regular cover}, i.e., a  
(branched) cover over a field $k$ with $X$ a smooth and geometrically
irreducible curve over $k$.
By a {\it rational pullback} of $f$,
we mean a $k$-regular cover $f_{T_0}:X_{T_0}\rightarrow \Pp^1_k$ obtained by pulling back $f$ along some
non-constant rational map $T_0:\Pp^1_k\rightarrow \Pp^1_k$.
If $f$ is given by a polynomial \hbox{equation $P(t,y)=0$} (with $f$ corresponding to the $t$-coordinate projection) and $T_0$ is viewed as a rational function $T_0(U)\in k(U)$, then an equation for the pullback $f_{T_0}$ is merely $P(T_0(u),y)=0$. 
As recalled in \S \ref{ssec:basics_2}, if $f$ is additionally Galois of group $G$, then for ``many'' $T_0$, the resulting map $f_{T_0}$ remains a $k$-regular Galois cover of group $G$.

The {\it Regular Inverse Galois Problem} over $k$ precisely consists in realizing each finite group as the Galois group of a $k$-regular Galois cover of $\Pp^1_k$. Rational pullback creates such covers, if one is already known. Finding  covers $g$ that are {\it not} rational pullbacks of some Galois covers $f$ of given group $G$ may be a more important issue. Indeed, if none of the $f$ are defined over $k$ (as a regular cover), the pullbacks $f_{T_0}$ are generally not expected to be either, in which case the remaining hope to realize $G$ as a regular Galois group over $k$ rests on those covers $g$.

\subsection{Main results} \label{ssec:intro_1}

Assume first $k=\Cc$. Consider the situation that all Galois covers $X\rightarrow \Pp^1_{\Cc}$ of given group $G$ can be obtained from a proper subset of them by rational pullback; say then that the subset is {\it regularly parametric}\footnote{The term ``regularly'' will be fully justified with the general definition of ``$k$-regular parametricity'' for which the base field $k$ is not necessarily algebraically closed (see Definition \ref{def:main}).}. For some finite groups $G$, a single cover $f$ may suffice. For example, the degree $2$ cover $\Pp^1_{\Cc}\rightarrow \Pp^1_{\Cc}$ sending $z$ to $z^2$ {\it is} regularly parametric. Such situations are, however, exceptional. For ``general'' finite groups, an opposite conclusion holds:
\begin{theorem} \label{thm:main1}
The finite subgroups of ${\rm PGL}_2(\Cc)$ (i.e., cyclic and dihedral groups, $A_4$, $S_4$, and $A_5$) are exactly those finite groups which have a regularly parametric cover $X\rightarrow \Pp^1_\Cc$. More precisely, given a finite group $G$, the following two statements hold:

\vskip 0,5mm

\noindent
{\rm (a)} if $G \subset {\rm PGL}_2(\Cc)$ \footnote{Above and throughout the paper, the condition ``$G \subset {\rm PGL}_2(\Cc)$'' (resp., ``$G \not\subset {\rm PGL}_2(\Cc)$'') really means that $G$ is {\it isomorphic} (resp., is {\it not isomorphic}) to a subgroup of ${\rm PGL}_2(\Cc)$.}, then $G$ has a regularly parametric cover,

\vskip 0,5mm

\noindent
{\rm (b)} if $G\not\subset {\rm PGL}_2(\Cc)$, then even the set of all Galois covers $X\rightarrow \Pp^1_\Cc$ of group $G$ and with at 
most $r_0$ branch points  is not regularly parametric, for any $r_0\geq 0$.
\end{theorem}

Hence, for $G\not\subset {\rm PGL}_2(\Cc)$, letting the branch point number grow provides an endless source of ``new'' Galois covers of $\mathbb{P}^1_\Cc$ of group $G$, {\rm i.e.}, not mere rational pullbacks of covers with a bounded branch point number, and so truly new candidates to be defined over $\Qq$.

Both statements of Theorem \ref{thm:main1} are non-trivial. The one showing that finite subgroups of ${\rm PGL}_2(\Cc)$ have a regularly parametric cover over $\Cc$ (with at most $3$ branch points) was proved in \cite[Corollary 2.5]{Deb18}, as a consequence of the {\it{twisting lemma}} and Tsen's theorem. The statement about  ``general'' finite groups, those not contained in ${\rm PGL}_2(\Cc)$, is a new result of this paper. In particular, it solves \cite[Problem 2.14]{Deb18}. 

Here is a more precise version. Given an integer $r\geq 0$ and an $r$-tuple ${\bf C}$ of non-trivial conjugacy classes of $G$, denote the stack of regular Galois covers $X \rightarrow \mathbb{P}^1$ of group $G$ with $r$ branch points by ${\sf H}_{G,r}$, and the stack of those with ramification type $(r,{\bf C})$ by ${\sf H}_{G,r}({\bf C})$ (see \S \ref{ssec:basics_1});
these are the  {\it Hurwitz stacks}. From Theorem \ref{thm:main1}, if $G\not\subset {\rm PGL}_2(\Cc)$, the set
$${{\sf H}_{G,\leq r_0}(\Cc) = \bigcup_{r\leq r_0} {\sf H}_{G,r}(\Cc)}$$
is never regularly parametric ($r_0\geq 1$). More precisely, we have the following:

\begin{theorem} \label{thm:main1-plus}
Let $k$ be an algebraically closed field of characteristic $0$ and let $G$ be a finite group, not contained in ${\rm PGL}_2(\Cc)$. Fix an integer $r_0\geq 0$. For every suitably large integer $R$, {depending on $r_0$}, there is a non-empty Hurwitz stack ${\sf H}_{G,R}({\bf C})$ such that {\bf not all} $k$-covers in ${\sf H}_{G,R}({\bf C})$ are rational pullbacks of $k$-covers in ${\sf H}_{G,\leq r_0}$.
\end{theorem}

Finite subgroups of ${\rm PGL}_2(\Cc)$, which are excluded in Theorem \ref{thm:main1-plus}, are the Galois groups of genus $0$ Galois covers. We show further that, if we also exclude the Galois groups of genus $1$ Galois covers, then the conclusion of Theorem \ref{thm:main1-plus} holds for {\textbf{all}} Hurwitz stacks ${\sf H}_{G,R}({\bf C})$ with {\bf C} of suitably large length $R$ (see Theorem \ref{lem:gge2}). In fact, this stronger conclusion can even be obtained without the extra genus $1$ assumption as long as the field $k$ is also assumed to be uncountable (see Remark \ref{rem:gen1_overC}).

A related result (see Theorem \ref{thm:new}) provides the following more explicit conclusion under the stronger assumption that the finite group $G$ has at least $5$ maximal non-conjugate cyclic subgroups (finite subgroups of ${\rm PGL}_2(\Cc)$ have at most $3$): {\it given any integer $r_0\geq 0$, for every suitably large even in\-te\-ger $R$, there is a non-empty Hurwitz stack  ${\sf H}_{G,R}({\bf C})$ such that {\textbf{no}} $k$-cover in ${\sf H}_{G,R}({\bf C})$ is a rational pullback of some $k$-cover in ${\sf H}_{G,\leq r_0}$}. 

If, instead of the stacks ${\sf H}_{G,r}$, we consider the smaller stacks ${\sf H}_{G,r}({\bf C})$, we obtain the following striking conclusion.
\begin{theorem}\label{introthm:r+1} 
Let $k$ be an algebraically closed field of characteristic $0$, let $G$ be a finite group not contained in $\PGL_2(\Cc)$, and let $(R,\bf C)$ be a ramification type for $G$ with $R\geq 4$. Then there exists a ramification type $(R+1,\bf D)$ for $G$  such that $\sH_{G,R+1}(\bf D)\not=\emptyset$ and no $k$-cover in $\sH_{G,R+1}(\bf D)$ is a pullback of a $k$-cover in $\sH_{G,R}(\bf C)$. 
\end{theorem}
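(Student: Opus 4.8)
The plan is to exploit two features of any rational pullback $f_{T_0}$ of a cover $f\in\sH_{G,R}({\bf C})$: a local one on inertia and a global one from Riemann--Hurwitz. Write the branch points of $f$ as $b_1,\dots,b_R$ with inertia canonical generators $g_i$ of order $d_i\ge 2$ (so $C_i=[g_i]$), and set $n=\deg T_0$. Over each $b_i$ the fibre of $T_0$ has ramification indices $e_{i,1},\dots,e_{i,m_i}$ with $\sum_j e_{i,j}=n$, and the corresponding point of $X_{T_0}$ is a branch point of $f_{T_0}$ exactly when $d_i\nmid e_{i,j}$, its inertia then being $[g_i^{\,e_{i,j}}]$. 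Hence \emph{every} inertia class of $f_{T_0}$ is a power $[g_i^{\,e}]$ of some $C_i$, and these classes are grouped over the $b_i$ subject to $\sum_j e_{i,j}=n$ and to $\rho:=\sum_{i,j}(e_{i,j}-1)\le 2n-2$ (Riemann--Hurwitz for $T_0$).

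First I would dispose of large $R$ by a pure count. A preimage that is \emph{not} a branch point has $d_i\mid e_{i,j}$, hence $e_{i,j}\ge 2$ and contributes at least $1$ to $\rho$; thus $\#\{\text{non-branch}\}\le\rho$. Since $\sum_i m_i=nR-\rho$ and the number of branch points is $\sum_i m_i$ minus the number of non-branch preimages, demanding exactly $R+1$ branch points gives
\[
R+1 \;=\; nR-\rho-\#\{\text{non-branch}\}\;\ge\; nR-2\rho\;\ge\; nR-4n+4,
\]
so $n(R-4)\le R-3$. For $R\ge 6$ this forces $n=1$, and a degree-one pullback has the same $R$ branch points as $f$, not $R+1$. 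Thus for $R\ge 6$ \emph{no} cover with $R+1$ branch points is a pullback from $\sH_{G,R}({\bf C})$, and it suffices to exhibit any non-empty $\sH_{G,R+1}({\bf D})$: the split tuple $(g_1,\dots,g_{R-1},\,g_R h^{-1},\,h)$ with $h\ne 1,g_R$ is a generating product-one tuple of length $R+1$.

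For $3\le R\le 5$ pullbacks with $R+1$ branch points do occur, so I would instead block them through the inertia structure, in two cases. If some maximal cyclic subgroup $N$ of $G$ is not conjugate to any $\langle g_i\rangle$, then a generator $D_0$ of $N$ is not a power of any $C_i$ (a power $[g_i^{\,e}]$ generating a maximal cyclic subgroup forces $\langle g_i^{\,e}\rangle=\langle g_i\rangle$ conjugate to $N$); any realizable ${\bf D}$ containing $D_0$, e.g.\ the split $(g_1,\dots,g_{R-1},\,g_R h^{-1},\,h)$ with $h\in D_0$, is then not a pullback. If instead every maximal cyclic subgroup is some $\langle g_i\rangle$ --- the situation for groups such as $\Hh_8$, dicyclic groups or $\mathrm{SL}_2(\Ff_q)$, which have few classes of maximal cyclic subgroups --- I would choose ${\bf D}$ to consist of primitive generators of several pairwise non-conjugate maximal cyclic subgroups together with one extra non-primitive class (for $\Hh_8$ with ${\bf C}=([i],[j],[k])$ this is ${\bf D}=([i],[j],[k],[-1])$, realized by $(i,j,k,-1)$). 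Realizing such a ${\bf D}$ as a pullback would force each relevant $b_i$ to supply, through a part $e$ coprime to $d_i$, a primitive generator of its own maximal cyclic subgroup, while the extra class demands an additional part over some $b_i$; a short Riemann--Hurwitz count then yields $\rho>2n-2$ for every $n$, a contradiction.

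The main obstacle is this last case. Making the Riemann--Hurwitz obstruction work uniformly over all $G\not\subset\PGL_2(\Cc)$ with few maximal cyclic subgroups --- simultaneously arranging that ${\bf D}$ is a generating product-one tuple, that its classes impose the primitive-generator constraints above, and that the extra branch point overspends the ramification budget --- is exactly where the hypothesis $G\not\subset\PGL_2(\Cc)$ enters: the excluded groups have genus-zero quotients and at most three classes of maximal cyclic subgroups, precisely the feature that lets them admit a regularly parametric cover and so defeats any such construction.
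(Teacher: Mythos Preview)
Your framework matches the paper's: the local description of pullback inertia and the global Riemann--Hurwitz bookkeeping are exactly the two tools used. Your count for $R\ge 6$ is correct and equivalent to the paper's Lemma~\ref{lem:rT0}, which gives $r_{T_0}\ge (R-4)n+4$; for $n\ge 2$ and $R\ge 6$ this already exceeds $R+1$, and any split tuple then suffices. Your reduction in case~(a) --- if some maximal cyclic subgroup is not conjugate to any $\langle g_i\rangle$, insert a generator of it --- is also the paper's first move (``we may assume that every conjugacy class in $G$ is a power of one of $C_1,\ldots,C_r$'').

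The genuine gap is your case~(b), and you identify it yourself. The paper's proof here is not a ``short Riemann--Hurwitz count'': it uses the full strength of Lemma~\ref{lem:rT0} (tracking the $a_i$ and the set $U_{T_0,f}$ separately) together with a case-by-case analysis on the multiset $\{e_1,\ldots,e_R\}$, and the choice of the splitting element $y$ is made differently in each subcase (an involution, a $p$-th power of $x_1$, an element from a specific class, etc.). For $R=4$ there are already exceptional multisets $\{2,2,2,3\}$ and $\{2,2,2,4\}$ where no choice of ${\bf D}$ works by counting alone; one must show instead that any $G$ with a product-$1$ tuple of those orders and only the given maximal cyclic classes lies in $\PGL_2(\Cc)$, which requires solvability (Burnside's $p^aq^b$ theorem), passage to a minimal normal subgroup, and a finite check of small group extensions. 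For $R=3$ the analogous list (Appendix~\ref{appendix:r=3}) is longer and each entry needs its own group-theoretic elimination. Your $\Hh_8$ example works, but the uniform claim ``$\rho>2n-2$ for every $n$'' fails precisely on these borderline multisets, which is where the hypothesis $G\not\subset\PGL_2(\Cc)$ must be invoked structurally rather than numerically.
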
 

\begin{remark} \label{rk:intro}
(a) The assumption that $k$ is algebraically closed can be weakened in some of the results above
to only assume that $k$ is {\rm ample}, or even arbitrary (of characteristic $0$) in some situations; see Theorem \ref{thm:main-general-fields}. Recall that a field $k$ is {\it ample} if every geometrically irreducible smooth $k$-curve has either zero or infinitely many $k$-rational points. Ample fields include separably closed fields, Henselian fields, fields $\Qq^{{\rm tot} \Rr}$, $\Qq^{{\rm tot} p}$ of totally real or $p$-adic algebraic numbers. See, e.g., \cite{Jar11, BSF13, Pop14} for more on ample fields.

\vskip 1mm

\noindent
(b) Theorem \ref{introthm:r+1} also holds for $R=3$. The proof uses the same tools and techniques as for the case $R\geq 4$, but is longer and more technical; it is given in \cite{DKLN18}.  
\end{remark}

\subsection{Application} \label{ssec:intro_2}
Theorem \ref{thm:main1-plus} has the following consequence. Given an algebraically closed field $k$ of characteristic $0$, denote the set of all Galois covers $X\rightarrow \Pp^1_k$ of group $G$ by ${\sf H}_{G}(k)$. Say that a finite group $G$ has the {\it Beckmann--Black regular lifting property} over $k$ if, for any two  $g_1$ and $g_2$ in ${\sf H}_{G}(k)$, there exist $f \in {\sf H}_{G}(k)$ and two non-constant rational maps $T_{01}, T_{02}:\Pp^1_k\rightarrow \Pp^1_k$ such that $g_i=f_{T_{0i}}$ ($i=1,2$).

\begin{corollary} \label{cor:intro}
Let $k$ be an algebraically closed field of characteristic $0$. The finite subgroups of ${\rm PGL}_2(\Cc)$ are exactly those finite groups for which the {\it Beckmann--Black regular lifting property over $k$} holds.
\end{corollary}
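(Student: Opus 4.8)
The plan is to read the Beckmann–Black regular lifting property as the statement that every pair of covers in $\sH_G$ admits a common cover from which both are rational pullbacks, and to prove the two implications separately. The implication $G\subset\PGL_2(\Cc)\Rightarrow$ property is immediate from the first statement of Theorem \ref{thm:main1}: that theorem provides, for such $G$, a single regularly parametric cover $f_0:X_0\to\Pp^1$, i.e. one from which \emph{every} $g\in\sH_G$ is a rational pullback. So given any $g_1,g_2\in\sH_G$, I would simply take $f=f_0$ together with the rational maps $T_{01},T_{02}$ witnessing $g_1,g_2$ as pullbacks of $f_0$; this establishes the property, in fact with a single cover $f$ working for all pairs at once.

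For the converse I would argue by contraposition, assuming $G\not\subset\PGL_2(\Cc)$ and producing a pair of covers with no common source. The crux is a boundedness lemma: if a cover $f\in\sH_G$, with total space $X$, admits a \emph{fixed} cover $g_1$ as a rational pullback $g_1=f_{T_{01}}$, then the number of branch points of $f$ is bounded in terms of $G$ and $g_1$ alone. To prove this, recall that the total space $X_{T_{01}}$ of $g_1$ carries a surjective morphism $\phi_1:X_{T_{01}}\to X$, coming from the first projection of the fibre product that defines the pullback; since $g_1$ is connected (being $G$-Galois) and $T_{01}$ is nonconstant, $\phi_1$ is genuinely nonconstant. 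Riemann–Hurwitz for $\phi_1$ then gives $g(X_{T_{01}})\ge g(X)$ as soon as $g(X)\ge1$, while Riemann–Hurwitz for the Galois cover $f$ reads $2g(X)-2=|G|\big(-2+\sum_i(1-1/e_i)\big)$, the $e_i\ge2$ being the orders of the inertia groups at the branch points of $f$. As each summand is $\ge1/2$, the number $r$ of branch points of $f$ satisfies $r\le 4+4(g(X)-1)/|G|\le 4+4(g(X_{T_{01}})-1)/|G|$, while the genus-zero case $g(X)=0$ forces $r\le3$ directly. Hence $f$ lies in $\sH_{G,\le r_0}$ for some $r_0=r_0(G,g_1)$ depending only on $G$ and the genus of $g_1$.

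The contradiction then follows from part (a) of Theorem \ref{thm:main1}. Fixing any $g_1\in\sH_G$ and letting $r_0=r_0(G,g_1)$ be the bound just obtained, that part yields, for $R$ suitably large, a ramification type $(R,{\bf C})$ with $\sH_{G,R}({\bf C})\ne\emptyset$ and a cover $g_2\in\sH_{G,R}({\bf C})$ that is not a rational pullback of any cover in $\sH_{G,\le r_0}$. Were the Beckmann–Black property to hold, applying it to $(g_1,g_2)$ would give a common source $f$ with $g_1=f_{T_{01}}$ and $g_2=f_{T_{02}}$; the first relation forces $f\in\sH_{G,\le r_0}$ by the lemma, so $g_2=f_{T_{02}}$ would be a pullback of a cover in $\sH_{G,\le r_0}$, contradicting the choice of $g_2$. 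The hard part is the boundedness lemma: one must verify carefully that the fibre-product projection $\phi_1$ is nonconstant and treat the finitely many genus-zero covers $f$ (which already have at most three branch points) separately, after which the Riemann–Hurwitz genus count controls $r$ and closes the argument.
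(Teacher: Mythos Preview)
Your proposal is correct and follows the same overall structure as the paper's proof: for the forward direction you use the existence of a single regularly parametric cover, and for the converse you fix a cover $g_1$, bound the branch-point number of any $f$ admitting $g_1$ as a pullback, and then invoke Theorem \ref{thm:main1} to get a contradiction.

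The only notable difference is in your boundedness step. You go through a genus comparison: Riemann--Hurwitz for $\phi_1:X_{T_{01}}\to X$ gives $g(X)\le g(X_{T_{01}})$, and then Riemann--Hurwitz for $f$ converts this into a bound $r\le 4+4(g(X_{T_{01}})-1)/|G|$ on the branch-point number of $f$. The paper instead invokes directly the inequality $r\le r_{T_0}$ of Theorem \ref{thm:DeAnnENS}(a) (originally \cite[Theorem 2.1]{DeAnnENS}), which says the branch-point number of $f$ is at most that of the pullback $f_{T_{01}}=g_1$. This makes the bound a one-liner with $r_0$ simply the branch-point number of $g_1$, and avoids the separate genus-zero case analysis you mention. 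Your route is perfectly valid but rederives a weaker version of a result already quoted in the paper.
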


The proof combines Theorem \ref{thm:main1-plus} with \cite[Theorem 2.1]{Deb18}, and is given in \S\ref{sssec:bounds} where the latter is recalled.

Our lifting property is a geometric variant of the Beckmann--Black {\it arithmetic} lifting property for which 
the cover $f\in {\sf H}_{G}(k)$ is requested to be defined over $\Qq$ and to {\it specialize} to some given Galois extensions $E_1/\Qq, \ldots, E_N/\Qq$ of group $G$ at some points $t_{01},\ldots, t_{0N}\in \Pp^1(\Qq)$  \footnote{The case $N=1$ is particularly significant as it supports Hilbert's strategy to solve the Inverse Galois Problem by first producing a $\mathbb Q$-regular Galois cover $f:X\rightarrow \Pp^1_\Qq$ of given group. It is known to hold for some groups: abelian, $S_n$, $A_n$, dihedral of order $2n$ with $n>1$ odd, etc.}.
There is no known counter-example to the latter. 
In comparison, our geometric variant is obvious for $N=1$ (by picking $T_0(U)=U$ so that $f_{T_0}=f$), and Corollary \ref{cor:intro} shows that it fails for $N\geq 2$ if $G\not\subset {\rm PGL}_2(\Cc)$. 

An intermediate stage towards counter-examples to the Beckmann--Black arithmetic property is to find groups $G$
{\it with no $\Qq$-parametric covers}, \hbox{i.e.} such that {no cover $f \in {\sf H}_{G}(\Qq)$ specializes to all Galois extensions of $\Qq$ of group $G$}. First examples were given in \cite{KL18, KLN19}. 
Extending them to all finite subgroups $G\not\subset {\rm PGL}_2(\Cc)$ is a next challenge, to which we will devote a
subsequent work. Theorem \ref{introthm:r+1} will be a key ingredient, the central idea being to combine the 
strong non regular parametricity conclusions of Theorem \ref{introthm:r+1} with a strategy  from \cite{Deb18} designed to deduce non $\Qq$-parametricity conclusions.

\subsection{Methods and organization of the paper}
Riemann's existence theorem (RET) is the fundamental theorem the above results build on. 
Furthermore, our proof of Theorem \ref{thm:main1-plus} exploits the geometric structure of the Hurwitz moduli spaces $\mathcal H_{G,R}(\bf C)$, namely, it shows that the dimension of the subset of all covers obtained by pulling back a cover in $\sH_{G,\leq r_0}$ is strictly smaller than that of $\mathcal H_{G,R}({\bf C})$, for sufficiently large  $R$. 

The tools used to bound the dimension of the subset of pullbacks include bounding the degree of defining polynomials for covers using a Riemann--Roch based result of Sadi \cite{Sad99}, Chevalley's theorem, and combinatorial ramification arguments. For the a\-na\-lo\-gous result over ample fields $k$, we show that $\sH_{G,R}({\bf C})(k)$ is non-empty, using Pop's $\frac{1}{2}$-Riemann existence theorem \cite{Pop94}, 
thus Zariski-dense in at least one connected component of $\mathcal H_{G,R}({\bf C})$; and hence
 this set is not contained in the above smaller dimension subset of pullbacks. Over arbitrary fields of characteristic $0$, we extend the theorem to certain families of groups using the rigidity method.

On the other hand, the proofs of Theorems \ref{introthm:r+1} and \ref{thm:new} construct explicit ramification types  whose Hurwitz stacks are non-empty by RET, and contain none of the pullbacks in question, as shown using combinatorial arguments,  the Riemann--Hurwitz formula, and Abhyankar's lemma. For an analogous result to Theorem \ref{thm:new} over ample fields $k$, we use  $\frac{1}{2}$-RET once more to ensure that  $\sH_{G,R}({\bf C})(k)\neq \emptyset$  for the constructed ramification types.

See \S\ref{sec:RP} for the proof of Theorem \ref{thm:main1-plus} and its variants. 
See  \S\ref{sec:RP2} for the proof of Theorem \ref{introthm:r+1}. 
\S \ref{sec:notation} is a preliminary section providing the basic notation and terminology together with some general prerequisites.

\section{Notation, terminology, and prerequisites} \label{sec:notation}

\subsection{Basic terminology {\rm (for more details, see \cite{DD97a} and \cite{DL13})}} \label{ssec:basics_1}

The base field $k$ is always assumed to be of characteristic $0$. We also fix a big algebraically closed field containing $k$ and the indeterminates that will be used, and in which every field compositum should be understood. 

\subsubsection{Covers} \label{sssec:covers}

Given a field $k$, a \textit{\textbf {$k$-variety}} is a geometrically irreducible and geometrically reduced quasiprojective $k$-scheme. A \textit{\textbf {$k$-curve}} is a $k$-variety of dimension $1$.

A field extension $F/k(T)$ is \textit{\textbf {$k$-regular}} if $F\cap \overline k= k$. A {\textit{{\textbf{$k$-regular cover}}}} $f:X\rightarrow \Pp^1_k$ is a non-constant finite morphism with $X$ a smooth $k$-curve; the function field extension $k(X)/k(T)$ is then $k$-regular. If in addition $k(X)/k(T)$ is Galois, then
$f:X\rightarrow \Pp^1_k$ is called a \textit{\textbf {$k$-regular Galois cover}}. If $k$ is algebraically closed,
we sometimes omit the word ``$k$-regular''.

We also use \textit{\textbf {affine equations}}: we mean the irreducible polynomial $P\in k[T,Y]$ of a primitive element of $k(X)/k(T)$, integral over $k[T]$. We say \textit{\textbf{defining equation}} if the primitive element is not necessarily integral over $k[T]$; then $P\in k(T)[Y]$. 

By \textit{\textbf {group}} and \textit{\textbf {branch point set}} of a $k$-regular cover $f:X\rightarrow \Pp^1_k$, we mean those of the extension $\overline k(X)/\overline k(T)$ \footnote{which is the function field extension associated with $f \otimes_k \overline{k} : X \otimes_k \overline{k} \rightarrow \mathbb{P}^1_{\overline{k}}$.}: the {\it{group}} of $\overline k(X)/\overline k(T)$ is the Galois group of its Galois closure and the {\it{branch point set}} of $\overline k(X)/\overline k(T)$ is the (finite) set of points $t\in \Pp^1(\overline k)$ such that the associated discrete valuations are ramified in $\overline{k}(X)/\overline k(T)$.

The field $k$ being of characteristic $0$, we also have the \textit{\textbf {inertia canonical invariant}} ${\bf C}$ of the $k$-regular cover $f:X\rightarrow \Pp^1_k$, defined as follows. If ${\bf t}=\{t_1,\ldots,t_r\}$ is the branch point set of $f$, then ${\bf C}$ is an $r$-tuple $(C_1,\ldots,C_r)$ of conjugacy classes of the group $G$ of $\overline k(X)/\overline k(T)$: for $i=1,\ldots,r$, the class $C_i$ is the conjugacy class of the distinguished\footnote{in the sense that they correspond to $e^{2i\pi/e_i}$ in the canonical isomorphism $I_{\mathfrak P} \rightarrow \mu_{e_i} =\langle e^{2i\pi/e_i} \rangle$.} generators of the inertia groups $I_{\mathfrak P}$ above $t_i$ in the Galois closure of $\overline k(X)/\overline k(T)$. The pair $(r,{\bf C})$ is called the \textit{\textbf {ramification type}} of $f$. More generally, given a finite group $G$, we say that a pair $(r,{\bf C})$ is \textit{a ramification type for $G$ over $k$} if it is the ramification type of at least one $k$-regular Galois cover $f:X\rightarrow \Pp^1_k$ of group $G$.

We also use the notation ${\bf e}=(e_1,\ldots,e_r)$ for the $r$-tuple with $i$th entry the ramification index $e_i=|I_{\mathfrak P}|$ of primes above $t_i$; $e_i$ is also the  order of elements of $C_i$, $i=1,\ldots,r$.

We say that two $k$-regular covers $f:X\rightarrow \Pp^1_k$ and $g:Y\rightarrow \Pp^1_k$ are \textit{\textbf{$\Pp^1_k$-isomorphic}} if there is an isomorphism $\chi: X\rightarrow Y$ defined over $k$ such  that $f= g\circ \chi$.

\subsubsection{Hurwitz stacks and Hurwitz spaces} \label{sssec:Hurwitz}

Given a finite group $G$, an integer $r\geq 1$, an $r$-tuple ${\bf C}$ of non-trivial conjugacy classes of $G$, and a field $k$ (of characteristic zero), we use the following notation:

\vskip 0.5mm

\noindent
- ${\sf H}_{G}(k)$: set of all $k$-regular Galois covers $f:X\rightarrow \Pp^1_k$ of group $G$,

\vskip 0.5mm

\noindent
- ${\sf H}_{G,r}(k)$ (resp., ${\sf H}_{G,\leq r}(k)$): subset of ${\sf H}_{G}(k)$ defined by the extra condition that the branch point number is $r$ (resp., that the branch point number is $\leq r$),

\vskip 0.5mm

\noindent
- ${\sf H}_{G,r}({\bf C})(k)$: subset of ${\sf H}_{G,r}(k)$ defined by the extra condition that the inertia canonical invariant is ${\bf C}$.

\vskip 0,5mm

The sets ${\sf H}_{G,r}(k)$ and ${\sf H}_{G,r}({\bf C})(k)$ can be viewed as the sets of $k$-rational points on some {\it stacks} ${\sf H}_{G,r}$ and ${\sf H}_{G,r}({\bf C})$, usually called \textit{\textbf {Hurwitz stacks}}. More formally, ${\sf H}_{G,r}(k)$ is the category whose objects are the $k$-regular Galois covers $f:X\rightarrow \Pp^1_k$ with $r$ branch points and given with an isomorphism $G\rightarrow {\rm Gal}(k(X)/k(T))$, 
and morphisms are the $\Pp^1_k$-isomorphisms commuting with the action of $G$; and similarly for ${\sf H}_{G,r}({\bf C})$.

We use the phrase {\it sets of $k$-points on the Hurwitz stacks ${\sf H}_{G,r}$ and  ${\sf H}_{G,r}({\bf C})$}  for the sets ${\sf H}_{G,r}(k)$ and ${\sf H}_{G,r}({\bf C})(k)$, but shall not use the stack structure.

On the other hand, we shall need the structure of variety of the associated moduli spaces, notably in \S\ref{ssec:Proof-1a} for the proof of Theorem \ref{thm:main1-plus}. The stacks ${\sf H}_{G,r}$ and ${\sf H}_{G,r}({\bf C})$ have indeed a coarse moduli space, which we denote by $\mathcal{H}_{G,r}$ and $\mathcal{H}_{G,r}(\bf C)$, respectively. They are commonly referred to as \textit{\textbf {Hurwitz spaces}}; see \cite{FV91} for more details\footnote{Due to our definition of the categories ${\sf H}_{G,r}(k)$ and  ${\sf H}_{G,r}({\bf C})(k)$, it is the so-called {\it inner} version of Hurwitz spaces that we shall be working with.}. They are finite unions of $r$-dimensional varieties and have this property: if $k$ is an algebraically closed field of characteristic $0$, the sets $\mathcal{H}_{G,r}(k)$ and $\mathcal{H}_{G,r}({\bf C})(k)$ are in one-one correspondence with the sets of isomorphism classes of objects in  the categories ${\sf H}_{G,r}(k)$ and ${\sf H}_{G,r}({\bf C})(k)$, respectively. Finally, if $k$ is a not necessarily algebraically closed field (but is still of characteristic $0$) and $f\in {\sf{H}}_{G,r}(k)$, then its isomorphism class $[f]$ still corresponds to a $k$-rational point of $\mathcal{H}_{G,r}({\bf C})$. If additionally $G$ has trivial center, then the spaces $\mathcal{H}_{G,r}(\bf C)$ are in fact fine moduli spaces, whence conversely any $k$-rational point on $\mathcal{H}_{G,r}({\bf C})$ corresponds to a $k$-regular Galois cover.
 
\subsection{Pullback and regular parametricity} \label{ssec:basics_2}
Let $k$ be a field of characteristic zero and $f:X\rightarrow \Pp^1_k$ a $k$-regular cover. Let $T_0\in k(U)\setminus k$; we make no distinction between the rational function $T_0$ and the rational map $T_0:\Pp^1_k \rightarrow \Pp^1_k$ \footnote{In particular, the degree of $T_0\in k(U)$ (the maximum of numerator degree and denominator degree in coprime notation) is the same as the degree of the associated map $\Pp^1_k \rightarrow \Pp^1_k$.}.
The fiber product $X \times_{f,T_0} \Pp^1_k$ provides a cartesian square

\[
\xymatrix{
X \times_{f,T_0} \Pp^1_k \ar[r]^{} \ar[d] & \mathbb P^1_k \ar[d]^{T_0} \\
X \ar[r]^f & \mathbb P^1_k 
}
\]

When $X \times_{f,T_0} \Pp^1_k$ is geometrically irreducible, denote by $f_{T_0}:X_{T_0}\rightarrow \Pp^1_k$ the smooth projective model of the top horizontal map: $X_{T_0}$ is the normalization of $\Pp^1_k$ in the function field of $X \times_{ f,T_0} \Pp^1_k$. The map $f_{T_0}:X_{T_0}\rightarrow \Pp^1_k$ is then a $k$-regular cover, which we call the \textit{\textbf{pullback of $f$ along $T_0$}}. 
More generally, covers $f_{T_0}$ are called \textit{\textbf{rational pullbacks of $f$}}. If $f:X\rightarrow \Pp^1_k$ is additionally assumed to be Galois of group $G$, then $f_{T_0}:X_{T_0}\rightarrow \Pp^1_k$ remains a $k$-regular Galois cover of group $G$.

Given an affine equation $P\in k[T,Y]$ of $f$, consider the subset $H_{P,\overline k(U)}$ of $\overline k(U)$ of all $T_0$ such that $P(T_0(U),Y)$ is irreducible in $\overline k(U)[Y]$. It is a Hilbert subset of $\overline k(U)$ \cite[Chapter 12]{FJ08}. If $T_0 \in H_{P,\overline k(U)}\cap k(U)$, then $T_0\notin k$ and the fiber product $X \times_{f,T_0} \Pp^1_k$ is geometrically irreducible; hence the pullback $f_{T_0}$ is a $k$-regular cover and  $P(T_0(U),Y)$ is a defining equation of $f_{T_0}$. The subset $H_{P,\overline k(U)}\cap k(U)$ only depends on $f$ (and not on the specific equation $P(T,Y)$). Denote it by $H_{f,k}$. The field $\overline k(U)$ being Hilbertian  \cite[Proposition 13.2.1]{FJ08}, the Hilbert subset $H_{P,\overline k(U)}$ is ``big'' in various senses: by definition of a Hilbertian field, it is infinite; by Theorems 3.3 and 3.4 from \cite{Deb99b}, it is dense for the Strong Approximation Topology. The same is true of the set $H_{f,k}$.

\begin{definition} \label{def:main} 
For ${\sf H} \subset {\sf H}_{G}(k)$, we define
$$\hskip 10mm {\rm PB}({\sf H}) =\left\{ f_{T_0} \hskip 2pt \left| \hskip 1mm \begin{matrix}
f \in {\sf H} \hfill \cr 
T_0\in H_{f,k} \hfill \cr
\end{matrix} \right.
\right\} \hskip 5mm \hbox{{\rm (PB for ``PullBack'').}}$$
A subset ${\sf H}$ of ${\sf H}_{G}(k)$ is {\it $k$-\G-parametric} if ${\rm PB}({\sf H}) \supset {\sf H}_G(k)$. We say that a cover $f\in {\sf H}_G(k)$ is {\it $k$-\G-parametric} if the subset  $\{f\}$ of ${\sf H}_G(k)$ is.
\end{definition} 

The $k$-\G-parametricity notion relates to the classical notion of {\it genericity} (in one parameter; see, e.g., \cite{JLY02}): clearly, if a cover $f\in {\sf H}_{G}(k)$ is generic, then it is $k$-\G-parametric. The paper \cite{DKLN1-part-two} says more about how the two notions compare.

\subsection{Prerequisites} \label{ssec:basics_3} 
Let $k$ be an algebraically closed  field of  characteristic $0$.

\subsubsection{Riemann Existence Theorem} \label{sssec:RET}

This fundamental tool of the theory of covers of $\Pp^1$ allows turning questions about covers into combinatorics and group theory considerations. 
\vskip 2mm

\noindent 
{\bf Riemann Existence Theorem (RET).} {\it Given a finite group $G$, an integer $r\geq 2$, a subset ${\bf t}$ of $\Pp^1(k)$ of $r$ points, and an $r$-tuple ${\bf C} = (C_1,\ldots,C_r)$ of non-trivial con\-ju\-ga\-cy classes of $G$, there is a Galois cover $f:X \rightarrow \Pp^1_k$  of group $G$, branch point set ${\bf t}$, and inertia canonical invariant ${\bf C}$ if and only if there exists $(g_1,\ldots,g_r)\in C_1\times \cdots \times C_r$ such that $g_1 \cdots g_r = 1$ and $\langle g_1,\ldots, g_r \rangle=G$. Furthermore, the number of such covers $f:X \rightarrow \Pp^1_k$, counted up to $\Pp^1_k$-isomorphism classes, equals
the number of $r$-tuples $(g_1,\ldots,g_r)$ as above, counted modulo componentwise conjugation by an element of $G$.
}
\vskip 2mm

Cf., e.g., \cite[Theorem 2.13]{Vol96} for the mere existence statement, or \cite{Deb01a} for a detailed overview.

The RET shows that a pair $(r,{\bf C})$ is a ramification type for $G$ over $k$ if the set, traditionally called the \textit{\textbf {Nielsen class}}, of all $(g_1,\ldots,g_r)\in C_1\times \cdots \times C_r$ such that  $g_1 \cdots g_r = 1$ and $\langle g_1,\ldots, g_r \rangle=G$ is non-empty. We shall use the RET to construct Galois covers of given group $G$ and with some special ramification 
type.

\subsubsection{Bounds for the branch point number and the genus of pulled-back covers} \label{sssec:bounds}

\begin{theorem} \label{thm:DeAnnENS}
Let $f:X\rightarrow \Pp^1_k \in {\sf H}_{G}(k)$ and 
$T_0$ be in the Hilbert subset $H_{f,k}$.
Denote the branch point number of $f$ (resp., $f_{T_0}$) by $r$ (resp., $r_{T_0}$) and the genus of $X$ (resp., $X_{T_0}$) by $g$ (resp., $g_{T_0}$). Then $r \leq r_{T_0}$ and $g \leq g_{T_0}$. Moreover, if $g> 1$ and $T_0$ is not an isomorphism, then $g < g_{T_0}$.
\end{theorem}

\begin{proof}
For branch point numbers, see \cite[Theorem 2.1]{Deb18}. Regarding genera, we may assume $g\not=0$ and $T_0$ is not an isomorphism. The claim then follows from applying the Riemann--Hurwitz formula to the cover $X_{T_0}\rightarrow X$. Namely, we obtain $2g_{T_0} - 2 \geq N(2g-2)$ with $N=\deg(f)$, whence $g_{T_0} \geq 2(g-1)+1 \geq g$ if  $g\geq 1$, with equality only if $g=1$.
\end{proof}

We can now explain how Corollary \ref{cor:intro} is deduced from Theorem \ref{thm:main1-plus}.

\begin{proof}[Proof of Corollary \ref{cor:intro} assuming Theorem \ref{thm:main1-plus}] 
First, assume that the group $G$ satisfies the Beckmann--Black regular lifting property over $k$. Pick a Galois cover $g_1\in {\sf H}_{G}(k)$ (such a cover exists from the RET). Let $r_1$ be the branch point number of $g_1$. Then, for any $g_2\in {\sf H}_{G}(k)$, there exists $f\in {\sf H}_{G}(k)$ and $T_{01},T_{02} \in k(U)$ such that $g_i=f_{T_{0i}}$ ($i=1,2$). From Theorem \ref{thm:DeAnnENS}, it follows from $g_1=f_{T_{01}}$ that the branch point number of $f$ is $\leq r_1$. This shows that ${\sf H}_{G,\leq r_1}(k)$ is regularly parametric. From Theorem \ref{thm:main1-plus}, $G\subset {\rm PGL}_2(\Cc)$. 

Conversely, if $G\subset {\rm PGL}_2(\Cc)$, then $G$ has a $k$-regularly parametric cover $f:X\rightarrow \Pp^1_k$ (see \cite[Corollary 2.5]{Deb18}); {\it a fortiori} the Beckmann--Black regular lifting property holds over $k$. 
\end{proof}

\begin{remark} The proofs above of Theorem \ref{thm:DeAnnENS} and Corollary \ref{cor:intro} cite two results which are only stated with $k=\Cc$: Theorem 2.1 and Corollary 2.5 from \cite{Deb18}.
These two results however hold more generally over any algebraically closed field of characteristic $0$. Indeed, regarding \cite[Theorem 2.1]{Deb18}, the assumption $k=\Cc$ was made for simplicity and can readily be generalized. As to \cite[Corollary 2.5]{Deb18}, the main ingredient there is Tsen's theorem that the field $\Cc(U)$ is quasi-algebraically closed, which is also true with $\Cc$ replaced by any algebraically closed field.
\end{remark}

\subsubsection{On varieties and their dimension} \label{sssec:varieties}

In the following, we recall some well-known facts from algebraic geometry about the structure and dimension of images and preimages under algebraic morphisms. 

It is elementary that the image of an $n$-dimensional variety under an algebraic morphism is always of dimension $\le n$. A bound in the opposite direction is given via the dimension of a fiber (see, e.g., \cite[\S 1.8, Theorems 2 and 3]{Mum99}):
 
\begin{theorem} \label{thm:dimbound}
Let $f: X\to Y$ be a dominant morphism between varieties $X$ and $Y$. For any point $p \in f(X)$, we have $\dim(Y)\le \dim(X)\le \dim(Y)+\dim(f^{-1}(p)).$
\end{theorem}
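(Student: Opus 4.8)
The plan is to treat the two inequalities separately, as they have quite different flavours. The left-hand inequality $\dim(Y)\le \dim(X)$ reflects the behaviour of function fields under dominant maps, whereas the right-hand inequality $\dim(X)\le \dim(Y)+\dim(f^{-1}(p))$ is the genuine fibre-dimension estimate, and it is there that Krull's principal ideal theorem enters. Throughout I use that $k$ is algebraically closed, so that (closed) points have codimension equal to dimension and the dimension of a variety equals the transcendence degree of its function field over $k$.

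For the first inequality I would argue via function fields. A dominant morphism $f\colon X\to Y$ of irreducible varieties induces an inclusion $f^*\colon k(Y)\hookrightarrow k(X)$: density of $f(X)$ in $Y$ forces the pullback of a nonzero rational function to be nonzero, so $f^*$ is injective. Hence $\dim(Y)=\operatorname{trdeg}_k k(Y)\le \operatorname{trdeg}_k k(X)=\dim(X)$.

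For the second inequality, set $n=\dim(X)$ and $m=\dim(Y)$; the goal is the stronger assertion that \emph{every} irreducible component of a nonempty fibre $f^{-1}(p)$ has dimension at least $n-m$. I would first localize, replacing $Y$ by an affine open neighbourhood $Y_0$ of $p$ and $X$ by $f^{-1}(Y_0)$, so as to work with coordinate rings. The key geometric input is that $p$, being of codimension $m$ in the $m$-dimensional variety $Y_0$, can be realized near $p$ as an isolated point (equivalently, a zero-dimensional irreducible component) of the common zero locus $V(g_1,\dots,g_m)$ of $m$ regular functions $g_1,\dots,g_m$ on $Y_0$. Pulling these back, one has $V(f^*g_1,\dots,f^*g_m)=f^{-1}\bigl(V(g_1,\dots,g_m)\bigr)$, which coincides with $f^{-1}(p)$ in a neighbourhood of the fibre; thus each irreducible component of $f^{-1}(p)$ is an irreducible component of $V(f^*g_1,\dots,f^*g_m)\subseteq X$. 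Since $X$ is $n$-dimensional and this is the zero locus of $m$ regular functions, Krull's generalized principal ideal theorem guarantees that every such component has dimension at least $n-m$. Therefore $\dim f^{-1}(p)\ge n-m$, which rearranges to the desired bound.

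The main obstacle is the step producing the $m$ functions $g_1,\dots,g_m$ that cut out $p$ as an isolated point of their common zero locus, together with the verification that their pullbacks carve the fibre out of $X$ with the correct codimension; this rests on a prime-avoidance or Noether-normalization argument and on care when passing to components dominating the relevant subvariety. An alternative that avoids choosing all $m$ functions at once is an induction on $m=\dim(Y)$: the base case $m=0$ (where $Y$ is a point and $f^{-1}(p)=X$) is immediate, and the inductive step intersects $Y$ with a single hypersurface $V(g)$ through $p$, controls the dimensions of the relevant components of $V(g)$ and $V(f^*g)$ via Krull, and restricts $f$ to a component dominating a chosen component of $V(g)$ through $p$. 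Either route reduces everything to the one-function case governed by the principal ideal theorem; since the statement is entirely standard, one may also simply invoke \cite[\S 1.8, Theorems 2 and 3]{Mumford}.
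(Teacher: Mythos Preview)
Your argument is correct and standard; in fact the paper does not prove this statement at all but simply quotes it as a well-known fact with the reference \cite[\S 1.8, Theorems 2 and 3]{Mumford}, the same reference you invoke at the end. So your proposal actually supplies more detail than the paper does.
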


We refer to, e.g., \cite[p. 239, lemme 1.8.4.1]{Gro64} for the next theorem:

\begin{theorem}[Chevalley]\label{thm:chevalley}
Let $f: X\to Y$ be a morphism between varieties $X$ and $Y$. Then the image of any constructible subset of $X$ is constructible\footnote{Here, a subset of a topological space is called {\it{constructible}} if it is a finite union of locally closed sets.}.
\end{theorem}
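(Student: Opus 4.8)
Although Theorem~\ref{thm:chevalley} is classical (and is only invoked here as a prerequisite), let me indicate the strategy I would follow to prove it. The plan is to reduce the statement to one key geometric fact and then run a Noetherian induction on $\dim(Y)$.

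First I would carry out a sequence of formal reductions. Since a constructible set is by definition a finite union of locally closed subsets, and since taking images commutes with finite unions, it suffices to show that $f(W)$ is constructible for a single locally closed subset $W\subseteq X$. Such a $W$ is quasi-projective, and decomposing it into its finitely many irreducible components reduces me to the case where the source is an irreducible variety; that is, it is enough to prove that $f(X)$ is constructible for $X$ a variety in the sense used here. As $X$ is irreducible, so is $f(X)$, and replacing $Y$ by the closure $\overline{f(X)}$ (still an irreducible quasi-projective variety) I may assume that $f$ is dominant.

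The heart of the argument is then the following claim: \emph{if $f:X\to Y$ is a dominant morphism of irreducible varieties, then $f(X)$ contains a nonempty open subset of $Y$.} I would prove this by passing to affine charts and invoking generic freeness. Choose affine opens $\Spec B\subseteq Y$ and $\Spec A\subseteq f^{-1}(\Spec B)$; dominance yields an injection $B\hookrightarrow A$ of integral coordinate rings, exhibiting $A$ as a finitely generated $B$-algebra. By the generic freeness lemma there is a nonzero $b\in B$ with $A_b$ free over $B_b$; since $B_b\hookrightarrow A_b$ with $A_b\not=0$, this free module is faithfully flat, so $\Spec A_b\to\Spec B_b$ is surjective and the nonempty basic open set $D(b)$ lies in $f(X)$. (Alternatively, the claim follows from Theorem~\ref{thm:dimbound} by spreading out a generically finite factorization $X\dashrightarrow Y\times\Aa^{d}\to Y$ of $f$, where $d=\dim X-\dim Y$.)

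Finally I would run the Noetherian induction. If $\dim(Y)=0$ then $f(X)$ is either empty or a point, hence constructible. For the inductive step, the claim provides a nonempty open $U\subseteq f(X)$; put $Z=Y\setminus U$, a closed subset with $\dim(Z)<\dim(Y)$. Each irreducible component of the closed set $f^{-1}(Z)\subseteq X$ is a variety mapping into $Z$, so by the induction hypothesis applied with $Z$ in place of $Y$ the image $f\bigl(f^{-1}(Z)\bigr)=f(X)\cap Z$ is constructible. Since $U\subseteq f(X)$, one has $f(X)=U\cup\bigl(f(X)\cap Z\bigr)$, a finite union of constructible sets, hence constructible. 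The step I expect to be the genuine obstacle is the key claim of the third paragraph: everything else is bookkeeping, while the claim is precisely where a nontrivial input—generic freeness, or equivalently the fiber-dimension estimate of Theorem~\ref{thm:dimbound}—must be fed in.
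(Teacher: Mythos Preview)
Your argument is correct and is essentially the standard textbook proof of Chevalley's theorem: reduce to the irreducible dominant case, use generic freeness (or an equivalent fiber-dimension input) to produce a nonempty open set in the image, and finish by Noetherian induction on $\dim(Y)$. There is nothing to compare against here, since the paper does not prove Theorem~\ref{thm:chevalley}; it is listed among the prerequisites in \S\ref{ssec:prerequisites} and invoked as a black box (only the consequence that the image of a subvariety is a finite union of varieties of no larger dimension is actually used later).
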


In particular, the image of any subvariety $X_0$ of $X$ is a finite union of varieties $Y_i \subset Y$, each of dimension at most $\dim(X_0)$.

For short, we shall say that a subset $S$ of a variety $X$ is {\it of dimension $\le d$}, if it is contained in a finite union of subvarieties of dimension $\le d$.

\subsubsection{Defining equations for Galois covers and their pullbacks} \label{sssec:defining}

To prove Theorem \ref{thm:main1-plus}, we shall use affine and defining equations 
of Galois covers of $\mathbb{P}^1_k$ (as defined in \S \ref{sssec:covers}).

\begin{lemma} \label{lem:degree_bound}
Let $G$ be a finite group and $r_0\in \nn$. Then every Galois cover in $\HGlerok$ can be defined by an affine equation $P(T,Y)=0$, where $P \in k[T][Y]$ is irreducible, monic in $Y$, of bounded $T$-degree depending only on $r_0$ and $G$, and of degree $|G|$ in $Y$.
\end{lemma}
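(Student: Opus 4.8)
The plan is to realize each cover $f:X\to\Pp^1_k$ in $\HGlerok$ by the minimal polynomial $P$ of a carefully chosen primitive element $y$ of $F=k(X)$ over $k(T)$ that is integral over $k[T]$ and whose only poles lie over $\infty$, with total pole order bounded in terms of $r_0$ and $|G|$ alone. The $Y$-degree is then immediate: $\deg_Y P=[F:k(T)]=|G|$, and since $y$ is integral over the integrally closed ring $k[T]$, the polynomial $P$ lies in $k[T,Y]$, is monic in $Y$, and is irreducible there by Gauss's lemma. The whole difficulty is therefore the $T$-degree, and the first observation I would make is the identity $\deg_T P=[F:k(y)]$: indeed $P$ is irreducible in $k[T,Y]$ with positive $T$-degree, so by Gauss's lemma $P(T,y)$ is, up to a scalar, the minimal polynomial of $T$ over $k(y)$. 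Consequently $\deg_T P=[F:k(y)]=\deg(y:X\to\Pp^1)$ equals the degree of the pole divisor of $y$. It thus suffices to find a primitive $y$, with poles only over $\infty$, whose pole divisor has bounded degree.

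Next I would bound the genus $g$ of $X$. Writing $r\le r_0$ for the number of branch points and $e_1,\dots,e_r\le|G|$ for the ramification indices, Riemann--Hurwitz applied to the Galois cover $f$ gives
\[
2g-2=-2|G|+|G|\sum_{i=1}^{r}\Bigl(1-\tfrac1{e_i}\Bigr)<|G|(r_0-2),
\]
so $g$ is bounded in terms of $r_0$ and $|G|$ only.

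The main step is to locate a bounded primitive element by Riemann--Roch. I would work with the $G$-invariant effective divisor $D_m=m\,f^{*}(\infty)$ of degree $m|G|$, supported over $\infty$; the space $L(D_m)$ consists exactly of functions integral over $k[T]$ with prescribed pole bounds over $\infty$, so it is $G$-stable and $L(D_m)^{H}=L(D_m)\cap F^{H}$ for every $H\le G$. A function $y\in L(D_m)$ is primitive precisely when its stabilizer in $G$ is trivial, and for this it is enough that $y\notin F^{H}$ for every subgroup $H\le G$ of prime order. For each such $H$, the quotient map $X\to X/H$ identifies $L(D_m)\cap F^{H}$ with the Riemann--Roch space $L_{X/H}(m\,f_H^{*}(\infty))$, where $f_H:X/H\to\Pp^1$ is the induced map of degree $|G|/|H|$; here the divisor $m\,f_H^{*}(\infty)$ has degree $m|G|/|H|$ and $X/H$ has genus $g_H\le g$. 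Taking $m=2g+1$, the Riemann--Roch theorem applies on $X$ and on every quotient, yielding
\[
\dim L(D_m)-\dim\bigl(L(D_m)\cap F^{H}\bigr)=m|G|\Bigl(1-\tfrac1{|H|}\Bigr)-g+g_H\ \ge\ \tfrac{m|G|}{2}-g>0 .
\]
Hence the finitely many subspaces $L(D_m)\cap F^{H}$ are proper, and since $k$ is infinite a general $y\in L(D_m)$ avoids all of them and is therefore a primitive element. Such a $y$ is integral over $k[T]$, and $\deg_T P=\deg(y)\le\deg D_m=(2g+1)|G|$, a bound depending only on $r_0$ and $|G|$.

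The main obstacle is precisely the uniformity in this last step: one must bound the threshold $m$ needed to make $L(D_m)$ strictly larger than each $L(D_m)\cap F^{H}$ by a quantity depending only on the genus (hence only on $r_0$ and $G$), simultaneously for all subgroups $H$. The identification of $L(D_m)\cap F^{H}$ with a Riemann--Roch space on the quotient $X/H$, together with the inequality $g_H\le g$, is what delivers this uniform comparison; the remaining verifications (that $L(D_m)$ consists of functions integral over $k[T]$, and that $\deg_T P$ equals the degree of the pole divisor) are routine.
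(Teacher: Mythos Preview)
Your argument is correct. The paper's own proof simply invokes an external reference (Sadi's thesis), quoting the bound $\deg_T P\le(2g+1)|G|\log|G|/\log 2$, and then bounds $g$ via Riemann--Hurwitz exactly as you do. Your route is different in that you give a self-contained construction of the primitive element via Riemann--Roch on $X$ and on the quotients $X/H$: taking $m=2g+1$ and comparing $\dim L(mf^*(\infty))$ with $\dim L_{X/H}(mf_H^*(\infty))$ shows each fixed subspace is proper, and since $k$ is infinite (char $0$) a general $y\in L(D_m)$ avoids the finitely many $F^H$ for $H$ of prime order. The identification $\deg_T P=[F:k(y)]=\deg(y)_\infty$ then gives $\deg_T P\le (2g+1)|G|$, which is in fact sharper by a factor $\log_2|G|$ than the bound the paper quotes. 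What you gain is a self-contained argument with a cleaner constant; what the citation buys the paper is brevity.
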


\begin{proof}
The fact that every Galois cover of group $G$ and of bounded genus $g\le g_0$ can be defined by an affine equation of bounded $T$-degree depending only on $g_0$ and $|G|$ follows essentially from an application of the Riemann--Roch theorem. Concretely, in \cite[\S2.2]{Sad99} the upper bound $(2g_0+1)|G|\log|G|/\log(2)$ was obtained, cf.\ also \cite[Lemma 4.1]{Deb17}.
It then suffices to note that the genus $g$ of a Galois cover of $\mathbb{P}^1_k$ with $r_0$ branch points is bounded from above only in terms of $r_0$ and $G$: the Riemann--Hurwitz formula gives $2g \le 2-2|G|+r_0(|G|\cdot (1-1/e_{{\rm{max}}}))$, with $e_{{\rm{max}}}$ the maximal element order in $G$.
\end{proof}

\begin{definition} \label{def:pde}
Let $d,e\in \nn$. We denote by $\mathcal{P}_{d,e}$ the space of polynomials $P(T,Y)\in k[T,Y]$ 
of degree exactly $d$ in $T$ and exactly $e$ in $Y$, viewed up to multiplicative constants.
Similarly, let $\mathcal P_{\leq d}$ denote the space of polynomials $Q(T)\in k[T]$ of degree at most $d$, viewed up to multiplicative constants. Furthermore, denote by $\mathcal{R}_d$ the set of rational functions over $k$ in one indeterminate $U$ of degree exactly $d$.
\end{definition}

The spaces $\mathcal{P}_{d,e}$ and $\mathcal{P}_{\leq d}$ are varieties in a natural way, via identifying the polynomials $P(T,Y)=\sum_{i=0}^{d} \sum_{j=0}^{e} \alpha_{i,j}T^iY^j$ and $Q(T)=\sum_{i=1}^d\beta_i T^i$ with the coordinate tuples $(\alpha_{i,j})_{i,j}$ and $(\beta_i)_i$, respectively, in the corresponding projective space. Similarly, $\mathcal{R}_d$ is a variety by identifying a rational function $T_0=T_0(U)=(\sum_{i=0}^d \beta_i U^i)/(\sum_{j=0}^d \gamma_j U^j)$, with coprime numerator and denominator, with the coordinate tuple $(\beta_0: \cdots :\beta_d:\gamma_0: \cdots :\gamma_d) \in \mathbb{P}^{2d+1}$. 
Note that the degree and coprimeness assumptions fix the numerator and denominator up to constant factor, whence the above identification is well-defined.

We can now define pullback maps on the level of the above spaces $\mathcal{P}_{d,e}$ and $\mathcal{R}_d$:

\begin{lemma} \label{pb_morphism}
Let $d_1$, $d_2$, and $d_3$ be positive integers. Then the map
$${\widetilde{{\rm PB}}}: \mathcal{P}_{d_1,d_2}\times \mathcal{R}_{d_3} \to \mathcal{P}_{d_1\cdot d_3, d_2},$$
defined by $(P(T,Y), T_0(U)) \mapsto $ ``numerator of $P(T_0(U),Y)$", is a morphism of algebraic varieties.
\end{lemma}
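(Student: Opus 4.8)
The plan is to verify directly from the explicit coordinate description of the three varieties $\mathcal{P}_{d_1,d_2}$, $\mathcal{R}_{d_3}$, and $\mathcal{P}_{d_1 d_3, d_2}$ that the assignment $(P,T_0)\mapsto$ ``numerator of $P(T_0(U),Y)$'' is given, in these coordinates, by polynomial formulas in the input coordinates, possibly after clearing denominators; a map between (quasi-projective) varieties that is defined everywhere by polynomial (or homogeneous polynomial) expressions is a morphism. So the real content is to exhibit these formulas and check that the target coordinate tuple is well-defined, i.e. that the construction genuinely lands in $\mathcal{P}_{d_1 d_3,d_2}$ with the asserted bidegree and does not secretly drop degree or vanish identically.

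First I would fix $P(T,Y)=\sum_{i=0}^{d_1}\sum_{j=0}^{d_2}\alpha_{i,j}T^iY^j$ and write $T_0(U)=A(U)/B(U)$ with $A(U)=\sum_{i=0}^{d_3}\beta_i U^i$ and $B(U)=\sum_{j=0}^{d_3}\gamma_j U^j$, where at least one of $A,B$ has degree exactly $d_3$, matching the $\Pp^{2d_3+1}$ description of $\mathcal{R}_{d_3}$. Substituting and clearing the denominator $B(U)^{d_1}$ gives
\begin{equation*}
B(U)^{d_1}\,P\!\left(\tfrac{A(U)}{B(U)},Y\right)=\sum_{i=0}^{d_1}\sum_{j=0}^{d_2}\alpha_{i,j}\,A(U)^{i}B(U)^{d_1-i}\,Y^{j},
\end{equation*}
which I will call $Q(U,Y)$. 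Each coefficient of a monomial $U^a Y^b$ in $Q$ is, by expanding the powers $A(U)^i B(U)^{d_1-i}$, a polynomial in the $\alpha_{i,j},\beta_i,\gamma_j$; moreover it is separately linear in the $\alpha$'s and homogeneous of degree $d_1$ jointly in the $\beta$'s and $\gamma$'s. Reading $Q$ as a polynomial in $T:=U$ and $Y$, its coefficient tuple is thus a tuple of bihomogeneous polynomials in the source coordinates, and these are exactly the candidate coordinates of the image point in $\mathcal{P}_{d_1 d_3,d_2}$ (viewed up to a multiplicative constant, consistently with the projective normalization of both $\mathcal{P}$-spaces and of $\mathcal{R}_{d_3}$).

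The main obstacle, and the step deserving genuine care rather than a formula-push, is checking that $Q(U,Y)$ really has $T$-degree exactly $d_1 d_3$ and $Y$-degree exactly $d_2$, so that the map lands in $\mathcal{P}_{d_1 d_3,d_2}$ and is defined on all of the product variety. The $Y$-degree is immediate since the $Y^{d_2}$ coefficient is $\sum_{i}\alpha_{i,d_2}A(U)^i B(U)^{d_1-i}$ and $P$ has $Y$-degree exactly $d_2$. The $U$-degree claim is the delicate point: a priori leading terms could cancel. Here I would use that $T_0$ has degree exactly $d_3$, so $A$ and $B$ have no common factor and $\max(\deg A,\deg B)=d_3$; one then argues that the top-degree behaviour of $Q$ as a polynomial in $U$ is governed by the term of top $T$-degree in $P$ composed with $T_0$, giving $U$-degree $d_1 d_3$ exactly (the leading coefficient does not vanish precisely because $T_0$ is a degree-$d_3$ rational function and $P$ has $T$-degree $d_1$). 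A clean way to phrase this is to note that the numerator of $P(T_0(U),Y)$ in lowest terms, together with the normalization by $B^{d_1}$, produces a polynomial of the stated bidegree, and that this normalization is the one compatible with the chosen projective coordinates. Once this is settled, the formulas display ${\rm PB}$ as a morphism of varieties, and the well-definedness with respect to the ``up to multiplicative constants'' identifications follows since rescaling $P$ or rescaling $(A,B)$ simultaneously rescales every output coordinate by a common factor.
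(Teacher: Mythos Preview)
Your approach is identical to the paper's: write $P(T,Y)=\sum_{i,j}\alpha_{i,j}T^iY^j$ and $T_0=A(U)/B(U)$, clear denominators to get $Q(U,Y)=\sum_{i,j}\alpha_{i,j}A(U)^iB(U)^{d_1-i}Y^j$, and observe that each output coefficient is polynomial (bihomogeneous) in the input coordinates. The paper's proof consists of exactly these two lines and stops there.

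You go further than the paper in worrying about whether the $U$-degree of $Q$ is \emph{exactly} $d_1d_3$, so that the map genuinely lands in $\mathcal{P}_{d_1d_3,d_2}$. Your argument for this---that the top $T$-degree term of $P$ governs the top $U$-degree of $Q$---is not correct as stated: when $\deg A<\deg B=d_3$, it is the \emph{bottom} $T$-term of $P$ that contributes the highest $U$-power, and nothing forces that term to be nonzero. For instance $P=TY$, $T_0=1/U$ gives $Q=Y$, of $U$-degree $0$ rather than $1$. So the degree can in fact drop on a proper closed subset of the source. The paper simply does not address this point; for its applications only dimension estimates on the image are used, and those are unaffected by replacing $\mathcal{P}_{d_1d_3,d_2}$ by the ambient space $\mathcal{P}_{\le d_1d_3,d_2}$.
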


\begin{proof}
Let $P(T,Y)=\sum_{i=0}^{d_1}\sum_{j=0}^{d_2} \alpha_{i,j}T^iY^j$ and $T_0=(\sum_{k=0}^{d_3}\beta_k U^k) / (\sum_{k=0}^{d_3} \gamma_kU^k)$. 
Then
$${\widetilde{{\rm PB}}}(P,T_0)=\sum_{i,j}\alpha_{i,j} (\sum_{k}\beta_kU^k)^i (\sum_{k}\gamma_kU^k)^{d_1-i} Y^j,$$
and identification with the spaces of coordinate tuples shows that ${\widetilde{{\rm PB}}}$ is given by a polynomial map.
\end{proof}

\section{Proofs of Theorem \ref{thm:main1-plus} and its variants} \label{sec:RP} 

In \S\ref{ssec:intro_1}, we mention two variants of Theorem \ref{thm:main1-plus}: a ``genus $\geq 2$'' version (Theorem \ref{lem:gge2}) and an ``explicit'' variant (Theorem \ref{thm:new}). \S\ref{ssec:Proof-1a} is devoted to the proof of the former, the proof of how Theorem \ref{thm:main1-plus} can be deduced being explained at the end of \S\ref{sssec:reduction} (where Theorem \ref{lem:gge2} is stated). \S\ref{ssec:proof_main1b} is devoted to the proof of Theorem \ref{thm:new}.  
Throughout this section, except in \S \ref{ssec:other_fields}, the field $k$ is algebraically closed of characte\-ristic $0$. 
Our proofs make use of the fact that $k$ is algebraically closed but certain parts carry over to more general fields. We collect such considerations in \S\ref{ssec:genfields}.

\subsection{Proof of Theorem \ref{thm:main1-plus}} \label{ssec:Proof-1a}

\subsubsection{Reduction to Theorem \ref{lem:gge2} and Lemma \ref{lem:geq1}} \label{sssec:reduction}

The following theorem is our strongest result regarding pullbacks of Galois covers of genus at least $2$. Its proof is also the main part of the proof of Theorem \ref{thm:main1-plus}.

\begin{theorem} \label{lem:gge2}
Let $G$ be a finite group, let $r_0\in \nn$, and let 
$${\sf H}_{G,\le r_0, g\ge 2}(k)= \HGlerok\setminus\{\text{genus }\hskip -5pt \le 1 \text{ covers} \}$$ 
be the set of all Galois covers $f:X\rightarrow \Pp^1_k$ of genus at least $2$ with Galois group $G$ and at most $r_0$ branch points. Then there exists $R_0\in \nn$ such that, for every ramification type $(R,{\bf C})$ for $G$ with $R\ge R_0$, we have
$$\HGRCk \not\subset {\rm PB}({\sf H}_{G,\le r_0, g\ge 2}(k)).$$	
In particular, ${\sf H}_{G,\le r_0, g\ge 2}(k)$ is not $k$-\G-parametric.
\end{theorem}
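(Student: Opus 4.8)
The plan is to run a dimension count in the Hurwitz space $\mathcal{H}^{in}_{G,R}({\bf C})$. Since this space is a finite union of quasi-projective varieties, each of dimension $R$, it cannot be contained in a constructible subset of dimension $<R$. Hence it suffices to prove that, once $R$ is large, the covers of type $(R,{\bf C})$ that occur as pullbacks $f_{T_0}$ with $f\in {\sf H}_{G,\le r_0,g\ge 2}(k)$ sweep out a subset of dimension strictly less than $R$; any cover of type $(R,{\bf C})$ outside this subset then witnesses $\HGRCk\not\subset {\rm PB}({\sf H}_{G,\le r_0,g\ge 2}(k))$, and the final ``in particular'' follows at once (regular parametricity would force ${\rm PB}(\cdot)\supseteq\HGRCk$).

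First I would collect the bounded data. The source covers $f$ range over the finitely many Hurwitz spaces $\mathcal{H}^{in}_{G,r}({\bf C}')$ with $r\le r_0$, a family of dimension $\le r_0$; by Lemma \ref{lem:degree_bound} they admit affine equations of bounded degree (so that Lemma \ref{pb_morphism} makes pullback an honest morphism of the ambient polynomial spaces), and by Riemann--Hurwitz their genera $g$ are bounded. If $f_{T_0}$ has type $(R,{\bf C})$, its genus $g_R$ is determined by $(R,{\bf C})$; applying Riemann--Hurwitz to $X_{T_0}\to X$ gives $2g_R-2\ge d_3(2g-2)\ge 2d_3$, where $d_3:=\deg T_0$ and the last inequality uses $g\ge 2$. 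Thus $d_3\le g_R-1$, so only finitely many $d_3$ occur, while counting preimages under $T_0$ of the at most $r_0$ branch points of $f$ gives $R\le r_0\,d_3$, i.e. $d_3\ge R/r_0$.

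The crux is to bound, for a fixed $f$ and a fixed $d_3$, the dimension of the set of $T_0\in\mathcal{R}_{d_3}$ whose pullback has type $(R,{\bf C})$. Every branch point of $f_{T_0}$ lies over the branch locus ${\bf s}$ of $f$, so the admissible $T_0$ are exactly those carrying one of finitely many prescribed ramification profiles over ${\bf s}$; imposing a profile over a fixed point is a condition of codimension equal to its ramification number, so these $T_0$ form a subvariety of $\mathcal{R}_{d_3}$ of dimension at most $(2d_3+1)-\sigma=\tau+3$. Here $\sigma$ and $\tau$ denote the total ramification of $T_0$ over ${\bf s}$ and off ${\bf s}$ (so $\sigma+\tau=2d_3-2$), and the ``$+3$'' is the ${\rm PGL}_2$ reparametrization of the source. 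Now each unit of the off-${\bf s}$ ramification lifts, through the $|G|$ unramified points of $X$ above the corresponding value, to $|G|$ units of ramification of $X_{T_0}\to X$; Riemann--Hurwitz for $X_{T_0}\to X$ therefore upgrades to $2g_R-2\ge 2d_3+|G|\tau$, whence $\tau\le (2g_R-2-2d_3)/|G|$. Combining this with the elementary estimate $(2g_R-2)/|G|<R-2$ (the $R$ branch points of $f_{T_0}$ each contribute less than $1$ to Riemann--Hurwitz) and with $d_3\ge R/r_0$ yields $\tau+3< R+1-2R/(r_0|G|)$.

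Finally I would assemble the count. The pullbacks of type $(R,{\bf C})$ form the image of an incidence set that fibers over the $\le r_0$-dimensional family of source covers with fibers of dimension $\le \tau+3$; by Theorems \ref{thm:chevalley} and \ref{thm:dimbound} its dimension is at most $r_0+\tau+3< R+(r_0+1)-2R/(r_0|G|)$, which is $<R$ as soon as $R> (r_0+1)r_0|G|/2=:R_0$. Taking the finite union over the finitely many admissible degrees $d_3$, source ramification types, and profiles keeps the total dimension below $R$, giving the theorem. The main obstacle is precisely the middle step: controlling the dimension of the space of admissible $T_0$ and, above all, bounding the off-${\bf s}$ ramification $\tau$ — this is exactly where the hypothesis $g\ge 2$ is indispensable, through the term $d_3(2g-2)\ge 2d_3$ which would collapse for $g\le 1$ and could let the pullback dimension reach $R$.
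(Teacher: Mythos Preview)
Your argument is correct and follows the same overall architecture as the paper --- a dimension count showing that the pullbacks of type $(R,{\bf C})$ sweep out a subset of $\mathcal{H}^{in}_{G,R}({\bf C})$ of dimension $<R$ --- but your execution of the central technical step is genuinely different and in some ways cleaner.

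The paper bounds the number of branch points of $T_0$ lying \emph{outside} ${\bf s}$ (its Lemma~\ref{lem:tbd}) by a combinatorial argument on the cycle structures of the inertia permutations of $T_0$, combined with Hurwitz's bound $|G|\le 84(g-1)$ to extract the key inequality $s-2-\sum 1/e_i\ge 1/42$; it then feeds this into Lemma~\ref{lem:dimension_ratfct} (the $\dim W + s + 3$ estimate) and Lemma~\ref{lem:discriminant}, all carried out in the spaces $\mathcal{P}_{d,|G|}$ of defining equations. You instead bound the total off-${\bf s}$ ramification $\tau$ directly by applying Riemann--Hurwitz to the \emph{upstairs} map $X_{T_0}\to X$, observing that each unit of $\tau$ lifts to $|G|$ units there, and then cap $2g_R-2$ by a second Riemann--Hurwitz applied to $f_{T_0}$. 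This avoids both the Hurwitz automorphism bound and the combinatorics of Lemma~\ref{lem:tbd}, and yields an explicit $R_0$ of the same flavour.

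One bookkeeping point to tighten: you mix two ambient spaces. You take the source family to be the Hurwitz spaces $\mathcal{H}^{in}_{G,r}({\bf C}')$ (dimension $\le r_0$), but then invoke Lemmas~\ref{lem:degree_bound} and~\ref{pb_morphism}, which live in the polynomial spaces $\mathcal{P}_{d_1,|G|}$ of dimension $\delta\gg r_0$. To make the incidence variety and the map $([f],T_0)\mapsto [f_{T_0}]$ honestly algebraic, either work throughout in the polynomial spaces (replacing $r_0$ by $\delta$ in the final count, which still gives $<R$ for $R$ large), or argue directly that pullback descends to a morphism of Hurwitz stacks via the universal family. Either fix is routine and does not affect the substance of your proof.
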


Theorem \ref{lem:gge2} is proved in \S\ref{ssec:Proof-1a}.2-4 below. The following lemma shows non-parametricity for sets of Galois $k$-covers of genus $1$:

\begin{lemma} \label{lem:geq1}
Let ${\sf H}_{G, g=1}(k)$ be the set of Galois covers $f:X\rightarrow \Pp^1_k$ of genus $1$ with group $G$. Then there exists a ramification type $(r,{\bf C})$ for $G$ such that no cover in $\HGrCk$ is a pullback from any cover in ${\sf H}_{G, g=1}(k)$. In particular, ${\sf H}_{G, g=1}(k)$ is not $k$-\G-parametric.
\end{lemma}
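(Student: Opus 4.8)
The plan is to reduce the statement to producing a single ramification type $(r,\mathbf{C})$ for $G$ with $\HGrCk\neq\emptyset$ none of whose covers can be a genus-one pullback, and to build such a type by hand via the Riemann Existence Theorem. The whole argument rests on one local observation: if $f_{T_0}$ is the pullback of $f$ along $T_0$, the inertia canonical invariant of $f_{T_0}$ is built from \emph{powers} of the inertia generators of $f$. Concretely, over a branch point $t_i$ of $f$ with inertia generator $g_i$ of order $e_i$, a point $u_0\in T_0^{-1}(t_i)$ with local ramification index $m$ contributes the inertia element $g_i^{\,m}$ to $f_{T_0}$, and $u_0$ is a genuine branch point exactly when $e_i\nmid m$. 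First I would record this computation carefully, since everything else follows from it.

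Next I would specialize to genus-one $f$. By Riemann--Hurwitz, a genus-one cover $f:X\to\Pp^1$ of group $G$ satisfies $\sum_i(1-1/e_i)=2$, hence has $r'\le 4$ branch points and all inertia orders $e_i\in\{2,3,4,6\}$. Combined with the local observation, this pins down two restrictions on \emph{any} pullback $f_{T_0}$ of a genus-one cover: every nontrivial inertia element of $f_{T_0}$ has order dividing some $e_i$, so its order lies in $\{2,3,4,6\}$; and all such elements lie in the union $\bigcup_{i=1}^{r'}\langle g_i\rangle$ of at most four cyclic subgroups, where $(g_1,\dots,g_{r'})$ is a product-one generating tuple meeting the genus-one relation. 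Moreover, by Theorem~\ref{thm:DeAnnENS}(a), such a pullback has genus $\ge 1$.

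With these constraints I would construct the bad type $(r,\mathbf{C})$ by cases, each time invoking the Riemann Existence Theorem to guarantee $\HGrCk\neq\emptyset$. If $G\subset\PGL_2(\Cc)$, take a genus-zero type: its covers cannot be genus-one pullbacks, which have genus $\ge 1$. If $G$ has an element of order outside $\{2,3,4,6\}$, include its conjugacy class in $\mathbf{C}$: no genus-one pullback can exhibit such an inertia order. Otherwise, I would choose a product-one generating tuple whose entries generate at least five \emph{distinct maximal} cyclic subgroups; then $\mathbf{C}$ cannot sit inside any union $\bigcup_{i=1}^{r'}\langle g_i\rangle$ of at most four cyclic subgroups, since by pigeonhole two distinct maximal cyclic subgroups would have to coincide with a single $\langle g_i\rangle$, which maximality forbids.

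The main obstacle is the last case: finite groups all of whose element orders lie in $\{2,3,4,6\}$ and which are not contained in $\PGL_2(\Cc)$ (for instance $\Zz/4\Zz\times\Zz/2\Zz$, $\Zz/3\Zz\times\Zz/3\Zz$, elementary abelian $2$-groups, and some dicyclic and $\mathrm{SL}_2$-type groups). Here one cannot separate by genus or by inertia order, so the construction must genuinely exploit the cyclic-subgroup structure. When $G$ has at most four maximal cyclic subgroups the clean ``five subgroups'' argument fails, and I would refine it by noting that the order constraints on the $e_i$ further limit which maximal cyclic subgroups a genus-one tuple can cover — e.g.\ a type-$(2,4,4)$ tuple realizes only one order-two maximal cyclic subgroup freely — so that spreading $\mathbf{C}$ over all maximal cyclic subgroups still escapes. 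Groups with too few maximal cyclic subgroups to spread at all (such as $Q_8$ and other groups with at most three) should be handled separately; there I expect they admit no genus-one cover whatsoever, making ${\sf H}_{G,g=1}(k)$ empty and the statement vacuous.
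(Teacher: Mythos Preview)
Your approach via ``inertia classes of a pullback are powers of the original inertia classes'' is sound where it bites, and it does cover most groups.  The gap is precisely where you flag it: groups $G\not\subset\PGL_2(\Cc)$ with all element orders in $\{1,2,3,4,6\}$ and at most four non-conjugate maximal cyclic subgroups.  Your handling of this residual case is only a sketch.  The expectation that groups with at most three such subgroups admit no genus-one cover is plausible but unproved (you would have to check it for $Q_8$, all dicyclic groups, $\mathrm{SL}_2(\Ff_q)$, etc.), and the four-subgroup case is genuinely delicate: for instance $\Zz/4\Zz\times\Zz/2\Zz$ \emph{does} have genus-one covers of type $(2,4,4)$, and which order-$2$ class is missed depends on the particular cover, so your ``spread $\mathbf{C}$ over all maximal cyclic subgroups'' idea requires a real argument rather than a pigeonhole.

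The paper avoids all of this with a single structural observation you are missing.  If $f:X\to\Pp^1$ has genus one, then $X$ is an elliptic curve, and the subgroup $N\trianglelefteq G$ of elements acting by translation has cyclic quotient $G/N$ (of order $2,3,4$ or $6$), with $X\to X^N$ an \emph{unramified} cover of genus-one curves.  Consequently every rational pullback of $f$ also has its $N$-subcover unramified.  If $N\ne\{1\}$, take any ramification type $(r,\mathbf{C})$ containing the class of some $x\in N\setminus\{1\}$: any cover $\widetilde X\to\Pp^1$ with invariant $\mathbf{C}$ then has $\widetilde X\to\widetilde X^N$ ramified, so cannot arise as such a pullback.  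If $N=\{1\}$ then $G$ is cyclic and has genus-zero covers, which no genus-$\ge 1$ pullback can reach by Theorem~\ref{thm:DeAnnENS}(a).  This handles every $G$ uniformly in a few lines, without any case split on the number of maximal cyclic subgroups.
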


\begin{proof}
Let $f:X\to \mathbb{P}^1 \in {\sf H}_{G, g=1}(k)$. As a consequence of the Riemann--Hurwitz formula, the tuple of element orders in the inertia canonical invariant of $f$ is one of $(2,2,2,2)$, $(3,3,3)$, $(2,4,4)$, or $(2,3,6)$. Furthermore, in each case, $G$ has a normal subgroup $N$ with cyclic quotient group $G/N$ of order $2$, $3$, $4$ and $6$, respectively, and such that the quotient map $X\to X/N$ is an unramified cover of genus-$1$ curves over $k$. Assume first $|N|=1$. Then $G$ is cyclic\footnote{In fact, $|G|\in\{2,3,4,6\}$,  since $G$ is then a group of automorphisms of some elliptic curve, see \cite[Chapter III, Theorem 10.1]{Sil09}.} and, therefore, there exist Galois covers of $\mathbb{P}^1_k$ of group $G$ and genus 0. In particular, no set of covers of genus $\ge 1$ can have those as pullbacks, by Theorem \ref{thm:DeAnnENS}.
	
Assume therefore $|N|>1$. Let $x\in N\setminus\{1\}$, and let $(r,{\bf C})$ be any ramification type for $G$ involving the conjugacy class of $x$. Since $X\to X/N$ is unramified, its image under any rational pullback of $f$ must also be unramified. But, of course, for any cover $\widetilde{X}\to \mathbb{P}^1$ with inertia canonical invariant ${\bf C}$, the subcover $\widetilde{X}\to \widetilde{X}/N$ is ramified by definition. Therefore, no cover of inertia canonical invariant ${\bf C}$ can be a pullback of $f$.
\end{proof}

\begin{remark} \label{rem:genus1}
In the case that $G$ is non-cyclic, the above proof shows immediately that, for $(r,{\bf C})$ any ramification type of genus $1$ with group $G$ ($r\in \{3,4\}$) and for each $s\ge r+1$, there exists a ramification type $(s,{\bf D})$ for $G$ such that no $k$-cover in ${\sf H}_{G,s}({\bf D})$ is a pullback of some $k$-cover in $\HGrC$. Indeed, for the only critical case $s=r+1$, it suffices to replace $(x_1,\ldots, x_n)\in {\bf C}$, where $x_1\notin N$ without loss, by $(x_0,x_0^{-1}x_1,\ldots,x_n)$ with $x_0\in N\setminus\{1\}$.
\end{remark}

Assuming Theorem \ref{lem:gge2} and Lemma \ref{lem:geq1}, we can now derive Theorem \ref{thm:main1-plus}. 

\begin{proof}[Proof of Theorem \ref{thm:main1-plus}]
By assumption, there is no Galois cover of $\mathbb{P}^1$ of group $G$ and genus 0. Let $(R,{\bf C})$ be a ramification type for $G$ with ${\bf C} =(C_1,\ldots,C_{R})$. From Theorem \ref{lem:gge2}, we know that not all covers in ${\sf{H}}_{G,R}({\bf{C}})(k)$ are rational pullbacks of some element of ${\sf H}_{G,\le r_0}(k)$ of genus $\ge 2$, if the length $R$ of ${\bf C}$ is sufficiently large (depending on $r_0$). From Lemma \ref{lem:geq1} and its proof, we know that no $k$-cover in ${\sf{H}}_{G,R}({\bf{C}})$ is a rational pullback of some $k$-cover in ${\sf H}_{G,\le r_0}$ of genus $1$, if ${\bf C}$ contains certain conjugacy classes. Altogether, if ${\bf C}$ contains all classes of $G$ sufficiently often, then certainly not all $k$-covers in ${\sf{H}}_{G,R}({\bf{C}})$ are reached via rational pullback of 
some $k$-cover in ${\sf H}_{G,\le r_0}$.
\end{proof}

\subsubsection{Proof of Theorem \ref{lem:gge2}: some dimension estimates} \label{ssec:312}

To prepare the proof of Theorem \ref{lem:gge2}, we investigate the behaviour of rational pullbacks of Galois covers. 

Recall that we have introduced two different ways of associating algebraic varieties to certain sets of Galois covers: the Hurwitz spaces and the spaces of defining equations. In the next lemma, we relate both concepts via a dimension estimate stating, in particular, that, in order to obtain defining equations for all covers in an $r$-dimensional Hurwitz space, we require at least $r$-dimensional subvarieties in the space of defining equations.

To state the lemma, denote by $\mathcal{P}_{d,e}^{\textrm{sep}}$ the subset of separable (in $Y$) polynomials in $\mathcal{P}_{d,e}$ (the latter set is introduced in Definition \ref{def:pde}). Note that this is a dense open subset of $\mathcal{P}_{d,e}$. Due to Lemma \ref{lem:degree_bound}, when looking for defining equations for covers in some $\HGlero(k)$, we can restrict without loss to a suitable finite union of $\mathcal{P}_{d,e}^{\textrm{sep}}$ (for $d$ smaller than some bound depending only on $G$ and $r_0$, and in fact always with $e=|G|$).

\begin{lemma} \label{lem:discriminant}
Let $r,s,d,e\in \nn$. Let $V\subset \mathcal{P}_{d,e}^{\rm{sep}}$ be a subvariety of dimension $s$. Let $G$ be a finite group and let $(r,{\bf C})$ be a ramification type for $G$. Then the set of $f\in \mathcal{H}_{G,r}({\bf C})(k)$ which have a defining equation in $V$ is of dimension $\le s$ \footnote{Note here that equivalent covers have the same defining equations by definition, so that the term ``defining equation for an element of $\mathcal{H}_{G,r}({\bf C})(k)$" is indeed well-defined.}. In particular, if $s<r$, there are infinitely many covers in $\HGrC(k)$ which do not have a defining equation in $V$.
\end{lemma}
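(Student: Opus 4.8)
The plan is to exhibit the set in question as the image of a constructible set under a map to the Hurwitz space and then bound its dimension by $s=\dim V$ using the two general dimension facts already recorded. Write $\mathcal{U}_r$ for the configuration space of $r$-element subsets of $\Pp^1$, an $r$-dimensional variety, and let $\beta\colon \mathcal{H}^{in}_{G,r}(\mathbf{C})\to \mathcal{U}_r$ be the branch point map sending a class $[f]$ to its branch point set. By the Riemann Existence Theorem, $\beta$ has finite fibers: over a fixed branch locus, the classes in $\mathcal{H}^{in}_{G,r}(\mathbf{C})$ correspond to the Nielsen tuples in the Nielsen class of $\mathbf{C}$ modulo componentwise conjugation, a finite set. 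Let $V_0\subseteq V$ be the locus of those $P$ which define a Galois cover $f_P\colon X_P\to \Pp^1$ of group $G$ with ramification type $(r,\mathbf{C})$; having group $G$ and a prescribed inertia canonical invariant are constructible conditions on the coefficients of $P$, so $V_0$ is constructible, hence of dimension $\le \dim V=s$. Since equivalent covers share the same defining equations, the set of classes $[f]$ admitting a defining equation in $V$ is exactly the image $\{[f_P] : P\in V_0\}$.

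The next step is to track the branch locus directly from the coefficients. For $P\in \mathcal{P}_{d,e}^{\rm sep}$ the discriminant $\Delta_P=\mathrm{disc}_Y(P)\in k[T]$ has coefficients polynomial in those of $P$, and for $P\in V_0$ the $r$ branch points of $f_P$ form a subset of the roots of $\Delta_P$ together with possibly $\infty$. Stratifying $V_0$ into finitely many constructible pieces on which the combinatorial pattern of genuine branch points is constant, the assignment $\psi\colon P\mapsto \{\text{branch points of }f_P\}\in \mathcal{U}_r$ becomes a morphism on each piece. By Chevalley's theorem (Theorem \ref{thm:chevalley}), $\psi(V_0)$ is a constructible subset of $\mathcal{U}_r$ of dimension $\le \dim V_0\le s$.

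Combining the two maps, note that $\beta([f_P])=\psi(P)$ for every $P\in V_0$ by construction, so $\{[f_P]:P\in V_0\}\subseteq \beta^{-1}(\psi(V_0))$. As $\beta$ has finite fibers, applying the fiber-dimension bound (Theorem \ref{thm:dimbound}) to the restriction of $\beta$ on each irreducible component of $\beta^{-1}(\psi(V_0))$ gives $\dim \beta^{-1}(\psi(V_0))\le \dim \psi(V_0)\le s$. Hence the set of classes with a defining equation in $V$ is of dimension $\le s$, proving the first assertion. For the final claim, recall that each component of $\mathcal{H}^{in}_{G,r}(\mathbf{C})$ has dimension exactly $r$; if $s<r$ the image above lies in a finite union of subvarieties of dimension $<r$, hence is a nowhere dense subset of each $r$-dimensional component. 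Its complement is therefore dense, so infinite (as $k$ is algebraically closed), and each class in the complement yields a cover in $\HGrC$ with no defining equation in $V$, producing infinitely many such covers.

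The main obstacle is the algebraicity claim of the second paragraph: making precise that the genuine branch locus of $f_P$ varies algebraically with $P$ on $V_0$. The subtlety is that some roots of $\Delta_P$ may be apparent singularities rather than true branch points, and which roots are genuine (and whether $\infty$ ramifies) may jump along $V_0$; this is resolved by passing to a finite constructible stratification on which the pattern is constant, but establishing the constructibility of these strata and the regularity of $\psi$ on each is the technical heart of the argument. Everything else reduces to the finiteness of the fibers of $\beta$ (from the Riemann Existence Theorem) together with Theorems \ref{thm:chevalley} and \ref{thm:dimbound}.
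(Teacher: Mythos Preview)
Your overall strategy is the same as the paper's: push the discriminant of $P$ into a space of one-variable polynomials, note that branch points are among its roots, and then pull back along the finite branch-point map $\beta$ to bound the dimension in the Hurwitz space. The difference is precisely at the step you flag as ``the main obstacle.'' You try to build a genuine morphism $\psi:V_0\to\mathcal{U}_r$ sending $P$ to its true branch locus, which forces you to stratify $V_0$ according to which discriminant roots are apparent singularities and whether $\infty$ ramifies; you then assert (but do not prove) constructibility of these strata and regularity of $\psi$ on each.

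The paper sidesteps this entirely. It never distinguishes genuine from apparent branch points. Instead it uses only the coarse fact that the branch locus of $f_P$ is \emph{some} $r$-subset of the root set of $\Delta(P)$ (assuming $\infty$ unbranched, which one arranges without loss). Concretely: set $W=\Delta(V)\subset\mathcal{P}_{\le c}$, and for each $t$ with $r\le t\le c$ and each $r$-subset $R\subset\{1,\dots,t\}$ take the root map $u:(\Aa^1)^t\to\mathcal{P}_{\le c}$, $(a_1,\dots,a_t)\mapsto\prod(T-a_i)$, and the projection $v:(\Aa^1)^t\to(\Aa^1)^r$ onto $R$. Since $u$ is finite, each $v(u^{-1}(W))$ has dimension $\le s$, and the set of possible branch loci is contained in the finite union of these. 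This replaces your delicate stratification by a finite union of honest morphisms, eliminating the constructibility burden. The rest of your argument (finite fibers of $\beta$, Theorems \ref{thm:chevalley} and \ref{thm:dimbound}, the dimension-$r$ Hurwitz space for the final claim) matches the paper's.
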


\begin{proof}
Denote the discriminant of a polynomial $Q(Y)=\sum_{i=0}^e a_i Y^i$ by 
$$\Delta(Q)= a_e^{2e-2} \prod_{i<j} (r_i-r_j)^2,$$ where the $r_i$'s are the roots of $Q$, counted with multiplicities.
The discriminant induces an algebraic morphism  $\Delta: \mathcal{P}_{d,e}^{\textrm{sep}}\ra \mathcal{P}_{\le c}$, $P(T,Y) \mapsto \Delta(P)$ (where $P$ is viewed as a polynomial in $Y$)  to the space of polynomials in $T$ of degree $\le c$ up to constant factors (see Definition \ref{def:pde}). 
Note that indeed the image of a separable polynomial is nonzero.
The fact that the degree of $\Delta(P)$ is bounded only in terms of $d$ and $e$ follows easily from the fact that the discriminant is a polynomial expression in the coefficients, viewed as transcendentals.
In particular, the dimension of $\Delta(V)\subseteq \mathcal{P}_{\le c}$ is at most $\dim(V) =s$.
	
Next, for any $r\le t\le c$ and any $r$-subset $R$ of $\{1,\ldots,t\}$, consider the morphisms $u: (\mathbb{A}^1)^t \to \mathcal{P}_{\le c}$ given by $(a_1,\ldots,a_t)\mapsto \prod_{i=1}^t (T-a_i)$ and $v: (\mathbb{A}^1)^t \to (\mathbb{A}^1)^r$ the projection on the coordinates in $R$. For each of these finitely many possible maps $u,v$, the map $u$ is finite and so $v(u^{-1}(W))$ is of dimension $\le s$, where $W=\Delta(V)$. But, since any branch point (assumed to be finite without loss) of a Galois cover of $\mathbb{P}^1_k$ is necessarily a root of the discriminant of a defining equation, such a cover can only have a defining equation in $V$ if its branch point set is in $v(u^{-1}(W))$ for some $u,v$ as above. Now, let $\mathcal{U}^r$ and $\mathcal{U}_r$ denote the spaces of ordered and unordered $r$-sets in $\mathbb{P}^1$, respectively. There is a well-defined finite morphism from $\mathcal{H}_{G,r}(\bf C)$ to $\mathcal{U}_r$: the branch point reference map. Now, let ${\mathcal{H}}'= \mathcal{H}_{G,r}({\bf {C}}) \times_{\mathcal{U}_r} \mathcal{U}^r$ be the ``ordered branch point set version" of the Hurwitz space $\mathcal{H}_{G,r}(\bf C)$. Then the branch point reference map induces a finite morphism $\psi: \mathcal{H}' \to \mathcal{U}^r (\subset (\mathbb{P}^1)^r)$. Furthermore, there is a natural finite morphism $\pi:\mathcal{H}'\ra \mathcal{H}_{G,r}(\bf C)$. 
	
In particular, each set $\pi(\psi^{-1}(v(u^{-1}(W))))$, and thus finally also the set of $f\in \mathcal{H}_{G,r}({\bf C})(k)$ having (only finite branch points and) a defining equation in $V$, is of dimension $\le s$. 

The additional assertion in the case $s<r$ follows immediately, since $\psi({\mathcal{H}}')\cap (\mathbb{A}^1)^r$ is of dimension $r$ and, in fact, equal to the set of all ordered $r$-sets in $\mathbb{A}^1$, by the Riemann Existence Theorem. 
\end{proof}

In the proof of Theorem \ref{lem:gge2}, we shall show as an intermediate result that a rational function pulling a prescribed Galois cover $f$ of $\mathbb{P}^1_k$ back into a prescribed $\HGRC(k)$ can only have a certain maximal number of branch points outside of the branch point set of $f$.
We then require the following auxiliary result stating that varieties of rational functions with such partially prescribed branch point sets cannot be too large.

\begin{lemma} \label{lem:dimension_ratfct}
Let $d,m,n,s\in \mathbb{N}$, let $W$ be a subvariety of $\mathcal{P}_{m,n}^{\textrm{sep}}$, and let $\mathcal{R}_d$ be as in Definition \ref{def:pde}. Then the set $W'\subset W \times \mathcal{R}_d$ of all $(P,T_0) \in W \times \mathcal{R}_d$ such that at most $s$ branch points of $T_0$ are not roots of $\Delta(P)$ is of dimension at most $\dim W + s +3$.
\end{lemma}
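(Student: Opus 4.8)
The plan is to fibre the problem over $W$ and to bound, for each $P\in W$, the locus of rational maps $T_0$ whose branch points lie --- apart from at most $s$ of them --- in the finite set $S_P$ of roots of $\Delta(P)$ (finite precisely because $P$ is separable in $Y$, which is the role of the hypothesis $W\subset \mathcal{P}_{m,n}^{\rm sep}$). Two geometric inputs drive the argument. First, the branch locus of a degree-$d$ rational map depends algebraically on $T_0$: writing $T_0=p/q$ with $\gcd(p,q)=1$ and $\max(\deg p,\deg q)=d$, the finite branch values are the roots of $\mathrm{disc}_U(p(U)-Tq(U))\in k[T]$, a polynomial in $T$ of degree $\le 2d-2$ (a possible branch value at $T=\infty$ is recovered by homogenizing). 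This realizes the branch divisor as a morphism from $\mathcal{R}_d$ to a space of effective divisors on $\mathbb{P}^1$, entirely parallel to the discriminant map used in Lemma \ref{lem:discriminant}. Second, and this is the key point, for any \emph{fixed} finite set $B\subset \mathbb{P}^1$ the locus $\{T_0\in \mathcal{R}_d : \mathrm{Branch}(T_0)\subseteq B\}$ has dimension at most $3$.

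I would prove this second input using the Riemann Existence Theorem together with the $\PGL_2$-action by reparametrization of the source. Indeed, RET shows that there are only finitely many isomorphism classes of connected degree-$d$ covers of $\mathbb{P}^1$ whose branch locus is contained in $B$ (they correspond to the finitely many tuples of permutations in $S_d$ with product $1$ generating a transitive subgroup, taken up to simultaneous conjugation). A rational map $T_0\in \mathcal{R}_d$ is such a cover together with a choice of coordinate on the source $\mathbb{P}^1$, and two rational maps yield the same cover exactly when they differ by precomposition with an element of $\Aut(\mathbb{P}^1)=\PGL_2(k)$. Hence the rational maps realizing a fixed isomorphism class form a single $\PGL_2$-orbit, of dimension $\le 3$ (stabilizers being finite deck groups); taking the union over the finitely many classes with branch locus in $B$ gives the claimed bound.

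With these inputs in hand, I would run the dimension count through an auxiliary incidence set, so as to avoid the (non-algebraic) operation of removing $S_P$ from the branch locus. Let $\mathrm{Sym}^s\mathbb{P}^1\cong \mathbb{P}^s$ be the space of effective degree-$s$ divisors, and set
$$\widetilde{W}=\{(P,T_0,E)\in W\times \mathcal{R}_d \times \mathrm{Sym}^s\mathbb{P}^1 : \mathrm{Branch}(T_0)\subseteq S_P\cup \mathrm{supp}(E)\}.$$
The containment condition is constructible (it says the radical of the branch-divisor polynomial of $T_0$ divides the product of $\Delta(P)$ with the polynomial cutting out $E$), so $\widetilde W$ is constructible. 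The projection $\widetilde W\to W\times \mathcal{R}_d$ has image exactly $W'$: given $(P,T_0)\in W'$, one takes $E$ supported on the at most $s$ branch points of $T_0$ outside $S_P$; conversely any point of the image satisfies $\#(\mathrm{Branch}(T_0)\setminus S_P)\le \#\mathrm{supp}(E)\le s$. By Chevalley (Theorem \ref{thm:chevalley}), $W'$ is constructible and $\dim W'\le \dim \widetilde W$. I would then bound $\dim \widetilde W$ via the other projection $\widetilde W\to W\times \mathrm{Sym}^s\mathbb{P}^1$: its target has dimension $\dim W+s$, and the fibre over $(P,E)$ sits inside $\{T_0 : \mathrm{Branch}(T_0)\subseteq S_P\cup \mathrm{supp}(E)\}$, a set of dimension $\le 3$ by the second input since $S_P\cup\mathrm{supp}(E)$ is finite. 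Applying the fibre-dimension bound (Theorem \ref{thm:dimbound}) component by component gives $\dim\widetilde W\le (\dim W+s)+3$, hence $\dim W'\le \dim W+s+3$, as required.

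The main obstacle, and the step I expect to demand the most care, is the uniform fibre bound of the second paragraph: one must check that the finiteness coming from the Riemann Existence Theorem survives as an honest bound on the dimension of a locus of \emph{rational functions}, i.e.\ that after fixing the cover the only remaining freedom is the $3$-dimensional reparametrization of the source. The secondary technical points --- constructibility of the incidence condition, the correct bookkeeping for a branch value at $\infty$ via homogenization, and the passage from the fibre-dimension theorem for varieties to constructible sets through the paper's convention on ``dimension $\le d$'' --- are routine but should be made explicit.
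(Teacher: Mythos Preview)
Your proof is correct and rests on the same key input as the paper's: rational functions of degree $d$ with branch locus contained in a fixed finite set form finitely many $\PGL_2(k)$-orbits (by RET), hence a locus of dimension $\le 3$. The difference is in how the ``$+s$'' is obtained. You introduce the incidence variety $\widetilde{W}\subset W\times\mathcal{R}_d\times\mathrm{Sym}^s\mathbb{P}^1$ and project onto $W\times\mathrm{Sym}^s\mathbb{P}^1$, bounding each fibre by $3$. The paper instead pushes $W'$ forward via the discriminant map $\Delta_2$ on $\mathcal{R}_d$, lifts to ordered root tuples $(a_1,\ldots,a_t)$ of $\Delta_2(T_0)$, and then applies the evaluation map $(P,(a_i))\mapsto (P,(\Delta(P)(a_i)))$; since for $(P,T_0)\in W'$ all but at most $s$ of the distinct $a_i$ are roots of $\Delta(P)$, the image lands in finitely many copies of $W\times(\mathbb{A}^1)^s$, and the ``$+3$'' then comes from bounding the fibre of $\Delta_2$ exactly as you do. Your incidence-variety packaging is conceptually cleaner and sidesteps the multiplicity bookkeeping in the discriminant's root tuple; the paper's chain-of-maps argument stays with explicit polynomial morphisms throughout and avoids having to argue constructibility of a radical-divisibility condition.
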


\begin{proof}
Denote the discriminant map on $\mathcal{P}_{m,n}^{\textrm{sep}}$ by $\Delta_1$ and the one on $\mathcal{R}_d$ by $\Delta_2$. Here we define the discriminant of a rational function $T_0(U)=T_{0,1}(U)/T_{0,2}(U)$ as the discriminant of the polynomial $T_{0,1}(U)-T\cdot T_{0,2}(U)$ with respect to $U$. Note that, in the special case of rational functions, every root of the discriminant is in fact a branch point (see, e.g., \cite[Lemma 3.1]{Mue02} for a stronger version of this statement). This means that, with $u:(a_1,\ldots,a_t) \mapsto \prod_{i=1}^t (T-a_i)$ as before, an element of $u^{-1}(\Delta_2(T_0))$ is already the exact branch point set of $T_0$, up to multiplicities.
	
Consider now the following chain of maps:
$$W' \subset W \times \mathcal{R}_d \hskip 2mm \rhfl{{\rm id}\times \Delta_2}{}   \hskip 2mm W \times \mathcal{P}_{\le t} \hskip 2mm \lhfl{{\rm id}\times u}{}  \hskip 2mm W\times (\mathbb{A}^1)^t \hskip 2mm \lhfl{\alpha}{}  \hskip 2mm W\times (\mathbb{A}^1)^t,$$
where $\alpha$ is defined by $\alpha(P,(a_1,\ldots,a_t))= (P, (\Delta_1(P)(a_1),\ldots,\Delta_1(P)(a_t)))$. Clearly, all maps in this chain are morphisms and, except for the first map ${\rm id}\times \Delta_2$, they are all finite. Therefore, $({\rm id}\times \Delta_2)(W')$ is of the same dimension as $\alpha^{-1}\bigl(({\rm id} \times (u^{-1}\circ \Delta_2))(W')\bigr)$ 
and, by definition of $W'$, the latter is contained in one of finitely many varieties isomorphic to $W \times (\mathbb{A}^1)^s$. Indeed, up to repetitions, all except for at most $s$ roots of $\Delta_2(T_0)$ are mapped to $0$ under $\Delta_1(P)$, for $(P,T_0)\in W'$. Thus, $({\rm id}\times \Delta_2)(W')$ is of dimension at most $\dim(W)+s$. Theorem \ref{thm:dimbound} then yields that $W'$ is of dimension at most $\dim(W) + s +\dim(\Delta_2^{-1}(p))$ for any $p$ equal to the discriminant of a rational function $T_0$ as above.
	
It remains to show that $\Delta_2^{-1}(p)$ is of dimension $\le 3$. Now, the set of genus zero covers $\mathbb{P}^1_k\to \mathbb{P}^1_k$ (viewed up to equivalence) of degree $d$ with prescribed branch point set is finite, and each such cover is given by a degree-$d$ rational function, unique up to ${\rm PGL}_2(k)$-equivalence. Since $\dim({\rm PGL}_2)=3$, the claim follows, completing the proof.
\end{proof}

\subsubsection{Proof of Theorem \ref{lem:gge2}: reduction to Lemma \ref{lem:tbd}}  \label{ssec:lemma}

\begin{lemma}\label{lem:tbd}
Let $G$ be a finite group and $f:X\to\mathbb{P}^1_k$ a Galois cover with group $G$ and genus $\ge 1$. Then, for every $j\in \nn$, there exists a constant $R_0\in \nn$, depending only on $j$ and the branch point number of $f$, such that, for every class-$R$-tuple ${\bf C}$ of $G$ ($R\ge R_0$) and for every rational function $T_0:\mathbb{P}^1_k\to \mathbb{P}^1_k$ in $H_{f,k}$ with more than $R-j$ branch points outside the branch point set of $f$, the pullback of $f$ along $T_0$ is not in $\HGRC(k)$.
\end{lemma}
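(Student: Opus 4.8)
The plan is to argue by contradiction: assume the pullback $f_{T_0}$ does lie in $\HGRC$, and deduce an upper bound on $R$ of the shape $R\le C(j,G)$ that must fail once $R$ is large. Since any cover outside $\HGRC$ is either disconnected or has the wrong number of branch points, I may assume $f_{T_0}$ is connected, hence a Galois cover of group $G$ with \emph{exactly} $R$ branch points, whose source $X_{T_0}$ is a smooth connected projective curve of some genus $g_{T_0}$. The whole point is to estimate $g_{T_0}$ in two ways: from above through the prescribed number $R$ of branch points of $f_{T_0}$, and from below through the ``wasted'' ramification of $T_0$ sitting outside the branch point set ${\bf t}$ of $f$. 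Comparing the two estimates will pin down $R$.

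For the upper bound I would apply the Riemann-Hurwitz formula to $f_{T_0}:X_{T_0}\to\mathbb{P}^1$, which is Galois of group $G$ with $R$ branch points of some orders $e_1,\dots,e_R$. Since each $e_i\le e_{\max}$, the maximal order of an element of $G$, this gives
\[
2g_{T_0}-2 \;=\; -2|G| + |G|\sum_{i=1}^{R}\Bigl(1-\tfrac1{e_i}\Bigr)\;\le\; -2|G| + |G|\,R\,\Bigl(1-\tfrac1{e_{\max}}\Bigr).
\]

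For the lower bound, the key idea is to use the \emph{other} projection $\phi:X_{T_0}\to X$ coming from the fibre product description of the pullback; it is the base change of $T_0$ along $f$, so $\deg\phi=\deg T_0=:d$. I claim that each branch point $t$ of $T_0$ lying outside ${\bf t}$ forces at least $|G|$ units of ramification in $\phi$. Indeed, over such a $t$ the cover $f$ is \'etale, so $f^{-1}(t)$ consists of $|G|$ points and, near each of them, the fibre product is smooth and $\phi$ is isomorphic to $T_0$ near $T_0^{-1}(t)$; hence the ramification of $\phi$ over each of these $|G|$ points equals the ramification $\rho_t\ge 1$ of $T_0$ over $t$. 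Summing over the $s$ branch points of $T_0$ outside ${\bf t}$ (where $s>R-j$ by hypothesis, so $s\ge R-j+1$) and discarding the nonnegative ramification of $\phi$ over primes above ${\bf t}$, I get $\mathrm{Ram}(\phi)\ge |G|\,s\ge |G|(R-j+1)$. Feeding this into Riemann-Hurwitz for $\phi$ and using $g\ge 2$ (so $2g-2\ge 2>0$) together with $d\ge 1$ yields
\[
2g_{T_0}-2 \;=\; d(2g-2)+\mathrm{Ram}(\phi)\;\ge\; |G|(R-j+1).
\]

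Combining the two displays and dividing by $|G|$ gives $R-j+1\le -2 + R\bigl(1-\tfrac1{e_{\max}}\bigr)$, that is, $R/e_{\max}\le j-3$, hence $R\le (j-3)\,e_{\max}$. It therefore suffices to set $R_0:=j\,e_{\max}$: for every $R\ge R_0$ the inequality is violated, so $f_{T_0}\notin\HGRC$, as desired. Note that $R_0$ depends only on $j$ and $G$ (through $e_{\max}$), hence a fortiori only on $j$ and the branch point number of $f$. I expect the only genuinely delicate point to be the ramification claim for $\phi$ over the \'etale locus of $f$ --- that the full ramification $\rho_t$ of $T_0$ is replicated on all $|G|$ sheets of $f$ --- which rests on the fibre product being smooth there and $\phi$ being literally the base change of $T_0$; once this is granted, the two Riemann-Hurwitz computations and their comparison are routine.
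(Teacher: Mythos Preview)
Your proof is correct, and it takes a genuinely different route from the paper's. The paper works entirely on the base $\mathbb{P}^1$: it applies the Riemann--Hurwitz formula to $T_0$, combines it with the formula for $f$ together with Hurwitz's automorphism theorem (this is where the paper extracts the constant $1/42$ from $|G|\le 84(g-1)$), and then does a somewhat delicate combinatorial manipulation of cycle lengths of the inertia generators $\sigma_1,\ldots,\sigma_{s+m}$ of $T_0$ to bound $m\le R-j$ directly. Their explicit choice is $R_0>42s(j-3)$, depending on the branch point number $s$ of $f$.

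Your argument instead passes through the genus $g_{T_0}$, bounding it above via Riemann--Hurwitz for $f_{T_0}$ and below via Riemann--Hurwitz for the second projection $\phi:X_{T_0}\to X$. This sidesteps both the Hurwitz automorphism bound and the cycle-structure bookkeeping, and yields the cleaner bound $R\le (j-3)e_{\max}$. Your $R_0$ depends on $G$ (through $e_{\max}$) rather than on $s$; since $G$ is fixed in the statement, this is perfectly acceptable --- indeed stronger, being uniform over all covers $f$ with the same group. Your closing remark ``hence a fortiori only on $j$ and the branch point number of $f$'' is slightly imprecisely phrased, but the intent is clear. The point you flag as delicate (that $\phi$ replicates the ramification of $T_0$ on each of the $|G|$ sheets above a point where $f$ is \'etale) is exactly right and is the heart of the argument; it follows from the fibre product being smooth where one of the two factors is \'etale, so the normalization step in constructing $X_{T_0}$ is vacuous there.
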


\begin{proof}[Proof of Theorem \ref{lem:gge2} assuming Lemma \ref{lem:tbd}]
Let $f: X\to \mathbb{P}^1 \in {\sf H}_{G,\le r_0, g\ge 2}(k)$. Let $g$ be the genus of $X$. Let $F=F(T,Y)$ be a separable defining equation for $f$, of minimal degree in $T$. Using the pullback map ${\widetilde{{\rm PB}}}$ as in Lemma \ref{pb_morphism}, denote by ${\widetilde{{\rm PB}}}(f)$ the set of all pullbacks of $F$ by rational functions of arbitrary degree, i.e., ${\widetilde{{\rm PB}}}(f)=\cup_{d\in \nn} {\widetilde{{\rm PB}}}(\{F\} \times \mathcal{R}_d)$. Then ${\widetilde{{\rm PB}}}(f)$ contains defining equations for all rational pullbacks of the cover $f$.
	
Let $(R,{\bf C})$ be a ramification type for $G$. By the Riemann--Hurwitz formula, the genus of a Galois $k$-cover of group $G$ arising as a degree-$d$ pullback of $f$ is $\geq d(g-1)+1$. As $g\ge 2$, this shows that there is $d_0\in \nn$, depending only on ${\bf C}$, such that, for all $d>d_0$, a degree-$d$ pullback of $f$ cannot have inertia canonical invariant ${\bf C}$ (as the genus is the same for all covers with invariant ${\bf C}$). In other words, to investigate the set of polynomials in ${\widetilde{{\rm PB}}}(f)$ which are defining equations for covers in $\HGRC(k)$, it suffices to restrict to pullback functions $T_0\in \mathcal{R}_{d}$, $d\le d_0$, with some bound $d_0\in \nn$ depending only on ${\bf C}$.
	
Let $D\in \nn$ be such that every $f\in \HGlero(k)$ has a separable defining equation in some space $\mathcal{P}_{d_1,|G|}$ with $d_1\le D$. Such $D$ exists by Lemma \ref{lem:degree_bound}. Let $\delta$ be the dimension of $\mathcal{P}_{D,|G|}$ (to be explicit, $\delta=(D+1)(|G|+1)-1$). 
	
Fix $j>\delta+3$, choose $R_0$ large enough (see the proof of Lemma \ref{lem:tbd} for an explicit bound) and $R\ge R_0$, and denote by $\mathcal{S}_f$ the set of   $T_0 \in H_{f,k}$ for some  $f\in {\sf H}_{G,\le r_0, g\ge 2}(k)$ such that $f_{T_0}$ has $R$ branch points.  
As seen above, the degree of such $T_0$ is absolutely bounded from above (in terms of the genus, and thus the branch point number of $f$) and, by Lemma \ref{lem:tbd}, all $T_0\in \mathcal{S}_f$ have at most $R-j$ branch points outside the branch point set of $f$. {\it A fortiori}, they are in the set $\mathcal{S}'_F$ of rational functions (of bounded degree as before and) with at most $R-j$ finite branch points outside the set of roots of the discriminant of $F(T,Y)$, for a defining equation $F(T,Y)=0$. The latter sets $\mathcal{S}'_F$ can be defined for all $F\in \mathcal{P}_{d_1,d_2}^{\rm{sep}}$ (not just for those defining Galois covers).

Now, consider the set $\mathcal{S}=\cup_{d_1\le D}\cup \{F\}\times \mathcal{S}'_F$, where the inner union is over all $F\in \mathcal{P}_{d_1,|G|}^{\rm{sep}}$. From Lemma \ref{lem:dimension_ratfct} (with $W=\mathcal{P}_{d_1,|G|}^{\rm{sep}})$, it follows that $\mathcal{S}$ is contained in a a finite union of varieties, of dimension at most $\dim(\mathcal{P}_{D,|G|}) + R-j+3 = \delta+R-j+3 < R$.
	
Therefore, the image of $\mathcal{S}$ under ${\widetilde{{\rm PB}}}$ is of dimension strictly smaller than $R$ as well. On the other hand, Lemma \ref{lem:discriminant} shows that no finite union of varieties of dimension $<R$ can contain defining equations for all $k$-covers in $\HGRC$.
	
Hence, ${\sf H}_{G,\le r_0, g\ge 2}(k)$ is not $k$-\G-parametric. 
\end{proof}

\begin{remark} \label{rem:gen1_overC}
In fact, the restriction to sets of Galois covers of genus $\ge 2$ is used only once in the proof of Theorem \ref{lem:gge2}; namely, to ensure that the degree of a rational function $T_0$, that pulls back a $k$-cover with $\le r_0$ branch points to a $k$-cover in  $\HGRC$, is bounded from above in terms of $r_0$ and $R$. This then ensures, via the various auxiliary lemmas, that ${\rm PB}({\sf H}_{G,\le r_0, g\ge 2}(k)) \cap \HGRC(k)$ is contained in a finite union of lower-dimensional varieties (as soon as $R$ is sufficiently large), i.e., that its complement inside $\HGRC(k)$ contains a Zariski-dense open subset. If the set ${\sf H}_{G,\le r_0, g\ge 2}(k)$ in Theorem \ref{lem:gge2} is replaced by ${\sf H}_{G,\le r_0, g\ge 1}(k)$, this strong conclusion will no longer be guaranteed. However, since all the auxiliary lemmas remain valid, we obtain in the same way that ${\rm PB}({\sf H}_{G,\le r_0, g\ge 1}(k)) \cap \HGRC(k)$ is contained in a {\it countable} union of lower-dimensional varieties (each corresponding to rational functions $T_0$ of some fixed degree). This at least implies $\HGRC(k) \not\subset {\rm PB}({\sf H}_{G,\le r_0, g\ge 1}(k))$ as soon as $k$ is uncountable. Thus the conclusion of Theorem \ref{lem:gge2} remains valid for ${\sf H}_{G,\le r_0, g\ge 1}(k)$ in the important special case $k=\mathbb{C}$.
\end{remark}

\subsubsection{Proof of Lemma \ref{lem:tbd}} \label{ssec:314} 

Let $t_1,\ldots,t_s$ be the branch points of $f$, and let $e_1,\ldots,e_s$ be the corresponding orders of inertia groups of $f$. Set $e_{{\rm{max}}}=\max\{e_1,\dots ,e_s\}$. We choose $R_0 = (s-2+j)e_{{\rm{max}}}$, and $R\ge R_0$ arbitrary. We divide the proof into two main steps.

\vskip 2mm

\noindent
{\it First step: Translation into a combinatorial statement.} Let $\sigma_1,\ldots,\sigma_s\in S_d$ be the inertia group generators of $T_0$ at $t_1,\ldots,t_s$, and $\sigma_{s+1},\ldots,\sigma_{s+m}$ the non-trivial inertia group gene\-rators at further points. For $\sigma\in S_d$, denote by $o(\sigma)$ the number of orbits of $\langle\sigma\rangle$, and set ${\rm{ind}}(\sigma)=d-o(\sigma)$. We claim that, assuming choice of $R_0$ as above, the following holds:

\vspace{2mm}

\noindent
{\bf Claim:}
$$\sum_{i=1}^s {\rm{ind}}(\sigma_i) \ge 2d-2-R+j,$$ or equivalently:
$$
\sum_{i=1}^s o(\sigma_i) \le d(s-2)-j+2 + R.
$$
The assertion then follows from the claim, 
since $T_0$ defines a genus-zero cover, whence the Riemann--Hurwitz formula yields $\sum_{i=1}^{s+m} {\rm{ind}}(\sigma_i) =2d-2$. Together with the claim, this enforces $m\le R-j$.

\vskip 2mm

\noindent
{\it Second step: Transformation of cycle structures.} To prove the claim, consider the cycle structures of $\sigma_i$, $i=1,\ldots,s$. We shall manipulate the cycle structures of the $\sigma_i$ in a controlled way, to make it easier to estimate the total number of orbits of all $\sigma_i$, $i=1,\ldots,s$.

By the definition of $T_0$ and Abhyankar's lemma, the cycle lengths of the $\sigma_i$ are multiples of $e_i$, for $i=1,\ldots,s$, with a total of exactly $R$ exceptions over all $i\in \{1,\ldots,s\}$. For each $i\in \{1,\dots,s\}$, let $n_{i,1}$,\dots, $n_{i,r(i)}$ denote the exceptional cycle lengths (i.e., the ones which are not multiples of $e_i$), in descending order. Define a permutation $\tau_i\in S_d$ in the following way. First, fill all the non-exceptional cycles of $\sigma_i$ into $\tau_i$. Next, find the smallest $j$ such that $\sum_{k=1}^j n_{i,k} \ge e_i$ and, instead of the cycles of length $n_{i,1}, \dots, n_{i,j}$, fill one single cycle of length $\sum_{k=1}^j n_{i,j}$ into $\tau_i$. Repeat this procedure until the sum of the remaining exceptional cycle lengths in $\sigma_i$ is less than $e_i$. Fill one more cycle of length the sum of those remaining exceptional cycle lengths into $\tau_i$. By definition, all except possibly one cycle of $\tau_i$ have length $\ge e_i$. In particular, the number $o(\tau_i)$ of orbits of $\langle \tau_i\rangle$ is bounded from above by $\lceil d/e_i \rceil < d/e_i + 1$. Also, since $f$ is of genus $\ge 1$, the Riemann--Hurwitz formula yields $\sum_{i=1}^s \frac{1}{e_i} \le s-2$.

Therefore, 
$$\sum_{i=1}^s o(\tau_i) < d(\sum_{i=1}^s \frac{1}{e_i}) + s \le d(s-2) + s.$$
On the other hand, we can effectively bound the difference between $\sum_{i=1}^s o(\tau_i)$ and $\sum_{i=1}^s o(\sigma_i)$ via the above construction. Namely, to obtain $(\tau_1,\dots,\tau_s)$ from $(\sigma_1,\dots,\sigma_s)$, certain sets of exceptional cycles were replaced by one big cycle. There were $R$ exceptional cycles in total and, since at most $e_{{\rm{max}}}$ cycles each were replaced by one cycle, the total difference $\sum_{i=1}^s \bigl(o(\sigma_i) - o(\tau_i)\bigr)$ is bounded from above by $R- \frac{R}{e_{{\rm{max}}}}$. Altogether, 
$$\sum_{i=1}^s o(\sigma_i) \le R - \frac{R}{e_{{\rm{max}}}} + \sum_{i=1}^s o(\tau_i) < R - \frac{R}{e_{{\rm{max}}}} + d(s-2)+s \le R - (s-2+j) + d(s-2)+s,$$
with the last inequality following from our choice of $R_0$. This finally yields
$$\sum_{i=1}^s o(\sigma_i) \le d(s-2)-j+2 + R,$$ 
showing the claim. This completes the proof.

\subsection{An explicit variant of Theorem \ref{thm:main1-plus}}  \label{ssec:proof_main1b} 

The goal of this subsection is to prove the following statement, alluded to in \S\ref{ssec:intro_1} as an explicit variant of Theorem \ref{thm:main1-plus}.

\begin{theorem} \label{thm:new}
Let $k$ be an algebraically closed field of characteristic $0$ and $G$ a finite group with at least $5$ maximal non-conjugate cyclic subgroups. Fix an integer $r_0\geq 0$. For every suitably large even in\-te\-ger $R$, there is a non-empty Hurwitz stack ${\sf H}_{G,R}({\bf C})$ such that {\textbf{no}} $k$-cover in ${\sf H}_{G,R}({\bf C})$ is a rational pullback of some $k$-cover in ${\sf H}_{G,\leq r_0}$.
\end{theorem}

\begin{remark} 
Subgroups of ${\rm PGL}_2(\Cc)$ have at most $3$ maximal non-conjugate cyclic subgroups. Other groups have exactly $3$: the quaternion group $\Hh_8$ or, more generally, dicyclic groups ${\rm DC}_n$ of order $4n$ ($n\geq 2$) (which include generalized quaternion groups ${\rm DC}_{2^{k-1}}$ $k\geq 2$), groups ${\rm SL}_2(\Ff_q)$ with $q$ a prime power, {etc}. For these groups, the conclusion from Theorem \ref{thm:main1-plus} holds but that from Theorem \ref{thm:new} is unclear. Replacing $5$ by $4$ in Theorem \ref{thm:new} seems feasible but hard and technical; for the sake of brevity, we avoid this slight improvement. Groups 
with $4$ maximal non-conjugate cyclic subgroups (for which the conclusion from Theorem \ref{thm:new} might also hold) include  $\Zz/4\Zz\times \Zz/2\Zz$, $\Zz/3\Zz\times \Zz/3\Zz$, $A_6$.
\end{remark}

\begin{proof} 
Fix a finite group $G$ with at least $5$ non-conjugate maximal cyclic subgroups. At first, assume that not all of them are of order $2$. Let $\gamma_1,\ldots, \gamma_5$ be generators of $5$ non-conjugate maximal cyclic subgroups of $G$, and let ${\mathcal C}_1,\ldots, {\mathcal C}_5$ be their conjugacy classes. Denote the order of $\gamma_i$ by $e_i$, $i=1,\ldots,5$ and, without loss of generality, assume $e_1>2$. Consider then a tuple $({\mathcal C}_1,\ldots,{\mathcal C}_5, {\mathcal C}_6, \ldots, {\mathcal C}_s)$
of conjugacy classes of $G$, not necessarily distinct, and satisfying the following:
\begin{align*}\label{equ:repeat}
\tag{A} & \text{ all the non-trivial conjugacy classes of $G$, but the powers  ${\mathcal C}_i^j$, $i=1,\ldots, 4$, } \\
& \text{ $j=1,\ldots, e_i-1$, appear in the set $\{{\mathcal C}_5,\ldots, {\mathcal C}_s$\}. }
\end{align*}
Consider the $(2s)$-tuple $\underline{\mathcal C} = (\mathcal C_1,\mathcal C_1^{-1},\ldots, \mathcal C_s, \mathcal C_s^{-1})$.  Note that the integer $s$ can be taken to be any suitably large integer, e.g., by repeating the conjugacy class ${\mathcal C}_5$. 

Picking $g_i\in \mathcal C_i$ ($i=1, \dots, s$), form the tuple $\underline{g}=(g_1,g_1^{-1},\ldots,g_s,g_s^{-1}$). As the elements of $\underline g$ and their powers contain at least one element from each conjugacy class, a classical lemma of Jordan (see \cite{Jor72}) implies that $\underline{g}$ forms a generating set of $G$. By construction, the product of entries of $\underline{g}$ is $1$. From the Riemann Existence Theorem, ${\sf H}_{G,2s}(\underline{\mathcal C})(k) \not= \emptyset$.

Let $h \in {\sf H}_{G,2s}(\underline{\mathcal C})(k)$. Assume there exist $r_0\in \Nn$, a Galois cover $f\in {\sf H}_{G,\leq r_0}(k)$, and $T_0\in H_{f,k}$ (as defined in \S  \ref{ssec:basics_2}) of degree $N$ such that $h$ and the pullback $f_{T_0}$ are $\Pp^1_k$-isomorphic. Denote the branch point number of $f$ by $r$ (so $r\leq r_0$) and its inertia canonical invariant by ${\bf C}=(C_{f,1}, \ldots, C_{f,r}$). By \cite[\S 3.1]{Deb18}, the inertia canonical invariant of $f_{T_0}$ is a tuple ${\bf C}_{f,T_0}$ obtained by concatenating tuples of the form ${\bf C}_{f,T_0,j}=(C_{f,j}^{e_{j1}}, \ldots, C_{f,j}^{e_{jr_j}}),\, j=1,\ldots, r,$ where $r_j,e_{j,\ell}$ are integers with $r_j\geq 0$ and $e_{j,\ell}\geq 1$ for all $\ell=1,\ldots,r_j$, and $j=1,\ldots,r$. Note that some of the classes in ${\bf C}_{f,T_0}$ might be trivial. 

Denote by $p_j$ (resp., $q_j$) the number of $e_{j,\ell}$'s, $\ell=1,\ldots,r_j,$ that are equal to $1$ (resp., $>1$), for $j=1,\ldots,r$. For $j=1$, denote further by $u_1$ (resp., $v_1$) the number of $e_{1,\ell}$'s, $\ell=1,\ldots,r_1,$ that are equal to $2$ (resp., $>2$). Recall further from \cite[\S 3.1]{Deb18} that, since $e_{j1},\ldots,e_{jr_j}$ are the ramification indices of $T_0$ over some point, we have
$\sum_{\ell=1}^{r_j}e_{j,\ell} = N$, for $j=1,\ldots,r$. Hence, $p_j + 2q_j\leq N$, and $p_1+2u_1 + 3v_1\leq N$, or equivalently: 
\begin{equation}\label{equ:bounds-pqst}
q_j\leq \frac{N-p_j}{2}\leq \frac{N}{2} \text{ for }j=2,\ldots,r\text{ and }v_1\leq \frac{N-p_1-2u_1}{3}\leq \frac{N}{3}. 
\end{equation}
By the Riemann--Hurwitz formula for $T_0$, we also have 
\begin{equation}\label{equ:RH-T0-Pierre}
2N-2\geq \sum_{\ell=1}^{r_1}(e_{1,\ell}-1) +\sum_{j=2}^r \sum_{\ell=1}^{r_j}(e_{j,\ell}-1) = N-p_1-u_1-v_1 + \sum_{j=2}^r(N-p_j-q_j). 
\end{equation}

As $f_{T_0}$ and $h$ are $\Pp^1_k$-isomorphic, we have ${\bf C}_{f,T_0} = \underline{\mathcal C}$, up to order. Without loss of generality, we may assume $\mathcal C_1\in {\bf C}_{f,T_0,1}$. For each $i=1,\ldots, 4$, if $j_i\in \{1,\ldots,r\}$ is such that ${\mathcal C}_i\in {\bf C}_{f,T_0,j_i}$ \footnote{There may be {\it a priori} two different $j_i\in \{1,\ldots,r\}$ such that ${\mathcal C}_i\in {\bf C}_{f,T_0,j_i}$.}, then, as $\langle \gamma_i \rangle$ is a maximal cyclic subgroup of $G$, we have ${\mathcal C}_i =  {C}_{f,j_i}^{w_i}$ for some integer $w_i$ relatively prime to $e_i$. This, together with the assumption that $\mathcal C_1,\ldots,\mathcal C_4$ are not powers of each other, implies that the correspondence $i\mapsto j_i$ is injective. Thus, \eqref{equ:RH-T0-Pierre} and \eqref{equ:bounds-pqst} give
$$2N-2 \geq   N-p_1-u_1-v_1 + \sum_{i=2}^4(N-p_{j_i}-q_{j_i}) \geq   \frac{13}{6}N  - (p_1+u_1) - \sum_{i=2}^4p_{j_i},  $$
and hence  
\begin{equation}\label{equ:bound-N-p_1} 
N \leq 6(p_1+u_1+\sum_{i=2}^4p_{j_i}).
\end{equation}
 Since each $\mathcal C_i$, $i=1,\ldots,4$, appears at most  twice in ${\bf C}_{f,T_0}$ by (A), it follows that $p_{j_i}\leq 2$ for $i=1,\ldots,4$. Furthermore, since the powers of $\mathcal C_1$ appear at most twice and $e_1>2$, we also have $p_{1}+u_{1}\leq 2$. 
Thus, \eqref{equ:bound-N-p_1} gives $N\leq 48$. However, by a priori choosing $s$ to be large, the degree $N$ of $T_0$ is forced to be at least $49$, contradiction. 

It remains to consider the case where all the fixed $5$ non-conjugate maximal cyclic subgroups are of order $2$. We can reduce to the previous case by changing one of them to another maximal cyclic subgroup of order $>2$, unless all elements of $G$ are of order $2$. But, in this case, $G$ is an elementary abelian $2$-group of rank at least $3$, and the number of maximal conjugacy classes of cyclic groups is at least $7$.  Repeating the above argument with seven classes $\mathcal C_1,\ldots,\mathcal C_7$, replacing the previous $\mathcal C_1,\ldots,\mathcal C_5$, \eqref{equ:RH-T0-Pierre} and \eqref{equ:bounds-pqst} take the form
$$2N-2 \geq   \sum_{j=1}^r(N-p_j-q_j)\text{ and } q_{j_i} \leq \frac{N-p_{j_i}}{2} \leq \frac{N}{2} \text{ for }i=1,\ldots,6.$$
Thus, their combination gives 
$$ 2N-2  \geq    \sum_{i=1}^6(N-p_{j_i}-q_{j_i}) \geq  3N -\sum_{i=1}^6p_{j_i}, $$
and so $N \leq \sum_{i=1}^6p_{j_i}$. Once again, choosing $s$ and hence $N$ sufficiently large, we obtain a contradiction. 
\end{proof}

\subsection{Extension to more general fields} \label{ssec:other_fields} \label{ssec:genfields} 

We have assumed so far that the field $k$  is algebraically closed because we use the Riemann Existence Theorem. However, as we explain below, this assumption can be relaxed in some situations.

\begin{theorem} \label{thm:main-general-fields} 
Let $G$ be a finite group and let $k$ be a field of characteristic $0$.

\vskip 0,25mm

\noindent
{\rm (a)} Theorem \ref{thm:main1}{\rm{(b)}}  with $\Pp^1_{\Cc}$ replaced by $\Pp^1_k$ holds in each of these situations:

\vskip 0,25mm

\hskip 3mm {\rm (a-1)} $k$ is ample\footnote{Definition of ``ample field'' is recalled in Remark \ref{rk:intro}(a).},

\vskip 0,25mm

\hskip 3 mm  {\rm (a-2)} $G$ is abelian of even order,

\vskip 0,25mm

\hskip 3 mm  {\rm (a-3)} $G$ is the direct product $A\times H$ of an abelian group $A$ of even order and of a non-

\hskip 12mm  solvable group $H$ occuring as a regular Galois group over $k$ \footnote{In fact, the assumption on $H$ to be non-solvable can be removed with a bit of extra effort. Moreover, if $H$ is not a regular Galois group over $k$, then $G$ is not either. Hence, Theorem \ref{thm:main1}{\rm{(b)}} trivially fails.},

\vskip 0,25mm

\hskip 3 mm  {\rm (a-4)} $G=S_5$.

\vskip 0.25mm

\noindent
{\rm (b)} Theorem \ref{thm:new} holds if either $k$ is ample or $G$ is abelian.
\end{theorem}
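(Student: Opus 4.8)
The plan is to isolate the purely geometric core of Theorem~\ref{thm:main1}, which is insensitive to the base field, from the arithmetic needed to realize covers over $k$. The dimension estimates of \S\ref{ssec:312}--\S\ref{ssec:314} (Lemmas~\ref{lem:discriminant}, \ref{lem:dimension_ratfct}, \ref{lem:tbd}), hence Theorem~\ref{lem:gge2}, together with Lemma~\ref{lem:geq1}, are statements about varieties over $\overline{k}$; they continue to assert that, for $R$ large and $\mathbf{C}$ containing every class of $G$ sufficiently often, the pullback locus
$$\mathcal{P}=\{\,[f]\in\mathcal{H}^{in}_{G,R}(\mathbf{C}) : f=g_{T_0}\ \text{for some}\ g\in {\sf H}_{G,\le r_0}(\overline{k}),\ T_0\in\overline{k}(U)\setminus\overline{k}\,\}$$
is contained in a proper closed subvariety of $\mathcal{H}^{in}_{G,R}(\mathbf{C})$ of dimension $<R$. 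Since a $k$-pullback of a $k$-cover becomes, after base change $\overline{k}/k$, a $\overline{k}$-pullback of a $\overline{k}$-cover, any $k$-cover whose moduli point avoids the thin set $\mathcal{P}$ is automatically not a $k$-pullback from ${\sf H}_{G,\le r_0}(k)$. Thus everything reduces to producing a single $k$-regular cover in ${\sf H}_{G,R}(\mathbf{C})(k)$ off $\mathcal{P}$; equivalently, to showing that the \emph{realizable} $k$-points of $\mathcal{H}^{in}_{G,R}(\mathbf{C})$ — those coming from honest $k$-covers — are Zariski-dense for a suitable large $k$-rational ramification type $(R,\mathbf{C})$ meeting the ``all classes sufficiently often'' requirement.

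For the ample case (a-1), and for part (b) over ample $k$, the inner Hurwitz space is finite \'etale over the (smooth) configuration space of $R$ points in $\Pp^1$, hence smooth; over an ample field a smooth $k$-variety with one $k$-point has a Zariski-dense set of $k$-points. It therefore suffices to exhibit one $k$-cover with the prescribed ramification type, which is provided by the regular realizability of finite groups with prescribed inertia over ample fields (patching). This also lets one arrange $(R,\mathbf{C})$ to be $k$-rational with all classes present and, for part (b), of the paired shape $\underline{\mathcal{C}}=(\mathcal{C}_1,\mathcal{C}_1^{-1},\dots)$ of \S\ref{ssec:proof_main1b}. Density then forces a realizable $k$-point off $\mathcal{P}$, settling these cases.

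Over non-ample $k$ one loses this mechanism and must exploit explicit or unirational descriptions. For $G=S_5$ (a-4) one uses that $Z(S_5)=\{1\}$, so by the field-of-moduli${}={}$field-of-definition principle recalled before Lemma~\ref{lem:discriminant} every $k$-point of $\mathcal{H}^{in}_{S_5,R}(\mathbf{C})$ is a genuine $k$-cover; combined with a unirational (or explicit) family of such covers built by freely adjoining conjugate pairs of branch points, the realizable $k$-points are Zariski-dense over any infinite $k$. For $G$ abelian of even order (a-2), covers admit explicit Kummer/fiber-product models $y_i^{n_i}=\prod_j(T-t_j)^{a_{ij}}$; the even order supplies a class of order $2$, fixed by the cyclotomic character and placeable at $k$-rational branch points, while the remaining classes are grouped into Galois-stable packets so that the whole ramification type is $k$-rational and generating — and varying the branch points yields a Zariski-dense supply of realizable $k$-covers escaping $\mathcal{P}$. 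For $G=A\times H$ (a-3), one realizes $H$ over $k$ by hypothesis, realizes $A$ by the previous construction, and takes the compositum over disjoint branch loci; non-solvability of $H$ guarantees $G\not\subset\PGL_2(\Cc)$, and the free parameters of the abelian factor again give density. Part (b) for $G$ abelian is handled by the same explicit construction with $\underline{\mathcal{C}}$.

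The main obstacle, and the only genuinely new difficulty beyond the algebraically closed case, is \emph{descent}: upgrading a $k$-point of the inner Hurwitz space to an actual $k$-cover. For centerless groups ($S_5$) this is free, which is exactly why $S_5$ can be treated over arbitrary $k$; for groups with nontrivial center (the abelian cases) the field of moduli may differ from the field of definition, and one circumvents this by writing the covers down explicitly and choosing a $k$-rational ramification type — pairing classes with their cyclotomic conjugates and anchoring the order-$2$ class at rational points — so that descent holds by construction. The technical crux is to verify simultaneously that these explicit $k$-rational families stay Zariski-dense (so as not to be swallowed by $\mathcal{P}$) and that the chosen $\mathbf{C}$ still contains all classes of $G$ sufficiently often, as demanded by Theorem~\ref{lem:gge2} and Lemma~\ref{lem:geq1}.
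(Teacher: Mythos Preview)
Your overall strategy matches the paper's: separate the geometric dimension count over $\overline{k}$ from the arithmetic problem of producing enough $k$-covers in a chosen Hurwitz stratum. Two points need sharpening, and both are genuine gaps rather than cosmetic issues.

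First, your ``Zariski-dense'' claim in the abelian case (a-2), and hence also in (a-3), is too strong as stated. If $G$ is abelian of even order but not an elementary abelian $2$-group, the branch points attached to classes of order $>2$ are not free: $k$-rationality of the ramification type forces them into prescribed cyclotomic packets, so only the order-$2$ branch points vary freely in $k$. The resulting family of $k$-covers has dimension $\ge R-c$ for some constant $c$ depending only on $G$ (the paper takes $c=|G|$), not full dimension $R$. Your bound $\dim\mathcal{P}<R$ then does not suffice. The fix, which is implicit in the proof of Theorem~\ref{lem:gge2} (choose the parameter $j$ in Lemma~\ref{lem:tbd} large), is that for any fixed $m$ and all $R\ge R_0(m,r_0)$ the pullback locus has dimension $\le R-(m+1)$. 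This is precisely what the paper isolates as Condition~\ref{cond:manycovers} and Theorem~\ref{thm:main_arb_fields}; without that refinement, the argument for (a-2) and (a-3) does not close.

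Second, for (a-1) you correctly use ampleness to pass from one $k$-point on $\mathcal{H}^{in}_{G,R}(\mathbf{C})$ to a Zariski-dense set, but you do not explain why these $k$-points are \emph{realizable} as $k$-covers when $Z(G)\ne\{1\}$. You flag descent as the main obstacle in your final paragraph but only resolve it for $S_5$ (centerless) and for the abelian cases (explicit models). The paper handles the ample case by embedding $G$ as a quotient of a group $\widetilde{G}$ with trivial center, running the density argument for $\widetilde{G}$, and then pushing down.

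A minor remark on part (b): density is unnecessary there. The combinatorial proof of \S\ref{ssec:proof_main1b} shows that \emph{no} cover in the (suitably $k$-rationalized) stack ${\sf H}_{G,2s}(\underline{\mathcal{C}})$ is a pullback from ${\sf H}_{G,\le r_0}$, so a single $k$-cover in that stack suffices --- supplied by rigidity when $G$ is abelian, and by patching when $k$ is ample.
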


\begin{remark} The proof below actually gives a little more than what is
stated. Under the assumptions of Theorem \ref{thm:main-general-fields}(a), not only can we conclude that ${\sf H}_G(k) \not\subset {\rm{PB}}({\sf H}_{G,\le r_0}(k))$ for any integer $r_0\geq 0$ (as is stated), but what we really obtain is that 
\vskip 1,5mm

\centerline{${\sf H}_G(k)\otimes_k \overline k \hskip 1,5mm \not\subset \hskip 0,5mm {\rm{PB}}({\sf H}_{G,\le r_0}(\overline k))$.}
\vskip 1,5mm

\noindent
That is: for any $r_0\geq 0$, one can always find a $k$-cover in ${\sf H}_{G}$ that cannot be obtained, even after base change to $\overline k$, by pulling back some $\overline k$-cover in ${\sf H}_{G, \leq r_0}$ (and even 
along some non-constant rational map $T_0 \in \overline k(U)$ (and not just in $k(U)$). Similarly, if $k$ is ample or $G$ is abelian as in Theorem \ref{thm:main-general-fields}(b), what the proof shows is that for any $r_0\geq 0$ and any suitably large $s$, there is a Hurwitz stack ${\sf H}_{G,2s}(\underline{\mathcal C})$ such that ${\sf H}_{G,2s}(\underline{\mathcal C})(k)\not=\emptyset$ and 
\vskip 1,5mm

\centerline{${\rm{PB}}({\sf H}_{G,\le r_0}(\overline k)) \hskip 1mm \cap \hskip 1mm ({\sf H}_{G,2s}(\underline{\mathcal C})(k) \otimes_k \overline k) = \emptyset$,}
\vskip 1,5mm

\noindent
and not just ${\rm{PB}}({\sf H}_{G,\le r_0}(k)) \cap {\sf H}_{G,2s}(\underline{\mathcal C})(k) = \emptyset$.

\noindent
\end{remark}

\subsubsection{Proof of Theorem \ref{thm:main-general-fields}(b)}

In the proof of Theorem \ref{thm:new} (see \S\ref{ssec:proof_main1b}), the assumption $k=\overline k$ was only used to guarantee that there is at least one $k$-cover in the Hurwitz stack ${\sf H}_{G,2s}(\underline{\mathcal C})$. When $k$ is no longer algebraically closed, we first slightly modify the tuple $\underline{\mathcal C}$ to make it {\it $k$-rational}, {i.e.}, such that the action of $\Gal(\overline k/k)$ on $\underline{\mathcal C}$ (taking the power of the classes by the cyclotomic character) preserves $\underline{\mathcal C}$, up to the order. This can be done by replacing, for each index $i=1,\ldots,4$, 

\vskip 1mm
 
\centerline{each pair $({\mathcal C}_i,{\mathcal C}_i^{-1})$ by the tuple $({\mathcal C}_i,{\mathcal C}_i^{-1},,\ldots, {\mathcal C}_i^{e_i-1},{\mathcal C}_i^{-(e_i-1)}$).}

\vskip 1mm

\noindent
The modified tuple is indeed $k$-rational and the proof of Theorem \ref{thm:new} still holds after slight adjustments: the inequalities $p_{j_i} \leq 2$ (resp., $p_1+u_1\leq 2$) become $p_{j_i} \leq 2(e_i-1)$ (resp., $p_1+u_1\leq 2(e_1-1)$), $i=1,\ldots,4$,  changing only the constants in the final estimates.

With the tuple $\underline{\mathcal C}$ now $k$-rational, it is still true that the Hurwitz stack ${\sf H}_{G,2s}(\underline{\mathcal C})$ contains a $k$-cover, so that Theorem \ref{thm:new} holds, in the following situations:

\noindent
- $G$ abelian and $k$ arbitrary (as a consequence of the classical rigidity theory; see, e.g., \cite[Chapter I, \S4]{MM18} and \cite[\S3.2]{Vol96}),

\noindent
- $k$ ample and $G$ arbitrary. Namely, recall that, over the complete discretely valued field $k((T))$, the so-called $1/2$-Riemann Existence Theorem of Pop (see \cite{Pop94}) ensures that ${\sf H}_{G,2s}(\underline{\mathcal C})(k((T)))$ is non-empty. This implies ${\sf H}_{G,2s}(\underline{\mathcal C})(k)\neq\emptyset$ for an ample field $k$, as then $k$ is existentially closed in $k((T))$ (see \cite[Proposition 1.1]{Pop96} and \cite[\S4.2]{DD97b}).

\subsubsection{Proof of Theorem \ref{thm:main-general-fields}{\rm (a)}} \label{ssec:PAC}

In the following, we will denote equivalence classes $[f]$ in $\mathcal{H}_{G,r}({\bf C})$ of a cover $f$ 
(of group $G$ and ramification type $(r,\bf C)$)
simply by $f$ as well when there is no risk of confusion.

We shall deduce Theorem \ref{thm:main-general-fields}(a) from Theorem \ref{thm:main_arb_fields} below. Start with an arbitrary characteristic $0$ field $k$ and consider the following condition, for a finite group $G$:

\begin{condition} \label{cond:manycovers}
{\rm{(a)}} There exist a constant $m\ge 0$, infinitely many integers $R$ and, for each $R$, a ramification type $(R,{\bf C})$ for $G$ 
with the following property: the set of all equivalence classes $[f]\in \mathcal{H}_{G,R}(\bf C)$ such that $f$ is a $k$-regular Galois cover cannot be covered by finitely many varieties of dimension $<R-m$ {\rm (}in $\mathcal{H}_{G,R}({\bf C})(\overline{k})${\rm )}.

\vspace{0.5mm}

\noindent
{\rm{(b)}} Each ${\bf C}$ in {\rm (a)} contains every conjugacy class of $G$ at least once.
\end{condition}

We then have the following analog of Theorem \ref{lem:gge2} and Lemma \ref{lem:geq1}:

\begin{theorem} \label{thm:main_arb_fields}
Assume $G$ fulfills Condition \ref{cond:manycovers}(a) (resp., Condition \ref{cond:manycovers}(a) and (b)) over a field $k$ of characteristic $0$. Let $r_0\in \nn$, and let ${\sf H}={\sf H}_{G,\le r_0, g\ge 2}(k)$ (resp., ${\sf H}={\sf H}_{G,\le r_0, g\ge 1}(k)$) be the set of $k$-regular Galois covers with group $G$, branch point number $\le r_0$, and genus $\ge 2$ (resp., genus $\ge 1$). Then there are infinitely ramification types $(R,\bf C)$ for $G$  such that $\HGRCk \not\subset {\rm PB}({\sf H})$. In particular, ${\sf H}$ is not $k$-regularly parametric.
\end{theorem}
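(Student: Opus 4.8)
The plan is to run the proofs of Theorem \ref{lem:gge2} and Lemma \ref{lem:geq1} in parallel, the only genuinely new feature being that over a non-algebraically closed $k$ the ambient space $\mathcal{H}^{in}_{G,R}({\bf C})$ is no longer automatically ``full'' of $k$-covers: its role as a target of dimension $R$ is taken over by the $k$-rational locus, whose size is now guaranteed by Condition \ref{cond:manycovers}(a). The number $m$ from Condition \ref{cond:manycovers}(a) measures how much smaller than $R$ that locus may be, so every dimension estimate from \S \ref{ssec:312}--\S \ref{ssec:314} must be pushed strictly below $R-m$ rather than merely below $R$. This extra slack $m$ is a fixed constant, independent of $R$, and I would absorb it once and for all by a slightly stronger choice of the integer $j$ fed into Lemma \ref{lem:tbd}.

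First I would treat the genus $\ge 2$ contribution, i.e.\ the analogue of Theorem \ref{lem:gge2}. Fix an $R$ and a ramification type $(R,{\bf C})$ as supplied by Condition \ref{cond:manycovers}(a), and let $f\in {\sf H}$ have genus $\ge 2$. As in \S \ref{ssec:lemma}, Riemann--Hurwitz bounds the degree of any $T_0$ pulling $f$ into $\HGRC$, so only bounded-degree pullback functions are relevant; by Lemma \ref{lem:degree_bound} each $f\in {\sf H}$ has a separable defining equation in some $\mathcal{P}_{d_1,|G|}$ with $d_1\le D$, and I set $\delta:=\dim \mathcal{P}_{D,|G|}$. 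By Lemma \ref{lem:tbd}, such a $T_0$ has at most $R-j$ branch points outside the branch point set of $f$. Assembling the resulting defining equations into the set $\mathcal{S}$ exactly as in \S \ref{ssec:lemma} and applying Lemma \ref{lem:dimension_ratfct} (with $W=\mathcal{P}_{d_1,|G|}^{\rm{sep}}$ and parameter $R-j$) yields $\dim \mathcal{S}\le \delta+(R-j)+3$. Choosing $j>\delta+m+3$ (instead of $j>\delta+3$) forces $\dim \mathcal{S}<R-m$, and since ${\rm PB}$ is a morphism (Lemma \ref{pb_morphism}), $\dim {\rm PB}(\mathcal{S})<R-m$. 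Feeding ${\rm PB}(\mathcal{S})$, a finite union of subvarieties of separable polynomials of bounded degree, into Lemma \ref{lem:discriminant} shows that the set of classes $[f]\in \mathcal{H}^{in}_{G,R}({\bf C})$ admitting a defining equation in ${\rm PB}(\mathcal{S})$ --- in particular every $k$-cover of $\HGRCk$ that is a rational pullback of a genus $\ge 2$ member of ${\sf H}$ --- lies in a finite union of subvarieties of $\mathcal{H}^{in}_{G,R}({\bf C})(\overline{k})$ of dimension $<R-m$.

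Next I would dispose of the genus $1$ contribution, relevant only in the ``resp.'' case ${\sf H}={\sf H}_{G,\le r_0,g\ge 1}(k)$ under Condition \ref{cond:manycovers}(b). Here the dimension method is unavailable, Riemann--Hurwitz giving no degree bound when $g=1$, so I would reuse the ramification-theoretic obstruction of Lemma \ref{lem:geq1} verbatim: a genus $1$ $k$-cover $f$ carries a normal subgroup $N$ with $X\to X^N$ unramified, and when $|N|>1$ any rational pullback of $f$ keeps $X\to X^N$ unramified, whereas a cover of type ${\bf C}$ has that subcover ramified as soon as ${\bf C}$ meets the conjugacy class of some nontrivial element of $N$. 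Since Condition \ref{cond:manycovers}(b) forces ${\bf C}$ to contain every conjugacy class of $G$, this obstruction applies simultaneously to all such $f$, so no $k$-cover of type ${\bf C}$ is a pullback of a genus $1$ member of ${\sf H}$ with $|N|>1$; this covers every genus $1$ cover once $G$ is non-cyclic, which is exactly the situation of the intended applications (the degenerate case $|N|=1$ forces $G$ cyclic, hence $G\subset {\rm PGL}_2(\Cc)$, and is disposed of as in Lemma \ref{lem:geq1} through Theorem \ref{thm:DeAnnENS}). Thus the genus $1$ members of ${\sf H}$ contribute no element of $\HGRCk$.

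Finally I would combine the two parts: every element of $\HGRCk\cap {\rm PB}({\sf H})$ lies in a finite union of subvarieties of $\mathcal{H}^{in}_{G,R}({\bf C})(\overline{k})$ of dimension $<R-m$. By Condition \ref{cond:manycovers}(a) the $k$-rational locus of $\mathcal{H}^{in}_{G,R}({\bf C})$ is \emph{not} contained in any such finite union, so some $k$-cover of type ${\bf C}$ escapes ${\rm PB}({\sf H})$, giving $\HGRCk\not\subset {\rm PB}({\sf H})$; as Condition \ref{cond:manycovers}(a) supplies infinitely many such $R$, this proves the statement and, a fortiori, that ${\sf H}$ is not $k$-regularly parametric. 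I expect the main obstacle to be the dimension bookkeeping of the second paragraph: one must check that $m$ is genuinely decoupled from $R$ so that the single reinforced choice $j>\delta+m+3$ works uniformly across all the infinitely many $R$, and that estimates carried out over $\overline{k}$ legitimately bound the $k$-rational pullback locus --- which they do, since $k$-rational pullbacks form a subset of all pullbacks and hence inherit the $\overline{k}$-dimension bound.
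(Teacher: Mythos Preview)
Your proposal is correct and follows essentially the same route as the paper: strengthen the choice of $j$ in Lemma \ref{lem:tbd} to $j>\delta+m+3$ so that the pullback locus has dimension $<R-m$, then invoke Condition \ref{cond:manycovers}(a) in place of the full Hurwitz space; for genus $1$, use Condition \ref{cond:manycovers}(b) to feed the ramification obstruction of Lemma \ref{lem:geq1}. The one point the paper treats more carefully is the cyclic subcase over non-closed $k$: the genus-$0$ cover $k(\sqrt[n]{T})/k(T)$ needed for the Theorem \ref{thm:DeAnnENS} argument is $k$-regular only when $e^{2i\pi/n}\in k$, and when it is not, the paper observes via the Branch Cycle Lemma that the genus-$1$ cyclic covers are not defined over $k$ either, so ${\sf H}_{G,g=1}(k)$ is empty and there is nothing to exclude.
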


\begin{proof}
Observe that the crucial Lemma \ref{lem:tbd} in the proof of Theorem \ref{lem:gge2} (see \S \ref{ssec:lemma}) guarantees this: there exists $R_0\in \nn$ such that, for every ramification type $(R,{\bf C})$ for $G$ ($R\ge R_0$), the set of (defining equations of) Galois covers in ${\sf{H}}_{G,R}({\bf{C}})(\overline{k})$ which arise as rational pullbacks of some cover with $\le r_0$ branch points is contained in a union of finitely many varieties of dimension at most $R-(m+1)$. Then, with $R$ sufficiently large and $(R,{\bf C})$ as in Condition \ref{cond:manycovers}, it follows as in Lemma \ref{lem:discriminant} that these varieties are not sufficient to yield defining equations for every cover in ${\sf H}_{G,R}({\bf C})(\overline{k})$ which is defined over $k$. Furthermore, the additional Condition \ref{cond:manycovers}(b) is sufficient for the proof of Lemma \ref{lem:geq1} over $k$, if $G$ is non-cyclic. For cyclic $G=\zz/n\zz$, Lemma \ref{lem:geq1} holds as soon as the genus-$0$ extension $k(\sqrt[n]{T})/k(T)$ is $k$-regular, i.e., as soon as $e^{2i\pi/n}\in k$. On the other hand, Galois groups of genus-$1$ cyclic covers are only $\zz/n\zz$ for $n\in \{2,3,4,6\}$, and it is easy to verify that, for $e^{2i\pi/n}\notin k$, these genus-$1$ covers cannot be defined over $k$ either, as a special case of the {\it{Branch Cycle Lemma}} (see \cite{Fri77} and \cite[Lemma 2.8]{Vol96}).
\end{proof}

\begin{proof}[Proof of Theorem  \ref{thm:main-general-fields}{\rm (a-1)}] Let $k$ be an ample field of characteristic zero. Equivalently to the definition of ``ample'', every 
 $k$-variety with a simple $k$-rational point has a Zariski-dense set of $k$-rational points (see \cite[Lemma 5.3.1]{Jar11}). On the other hand, the $1/2$-Riemann Existence Theorem yields plenty of class $r$-tuples ${\bf C}$ of $G$ with $\HGrCk \not= \emptyset$. For example, all ${\bf C}$ corresponding to an arbitrarily long repetition of the tuple $(x_1,x_1^{-1},\ldots,x_n,x_n^{-1})$, where $x_i$ runs through all non-identity elements of $G$, are fine. For $Z(G)= \{1\}$, this then implies the existence of a $k$-rational point on $\mathcal{H}_{G,r}(\bf C)$ (cf.\ \S \ref{sssec:Hurwitz}) and  as $k$ is ample that  at least one connected component of $\mathcal{H}_{G,r}(\bf C)$ has a Zariski-dense set of $k$-rational points. Since these $k$-rational points are 
 equivalence classes of covers $f$ defined over $k$ and all components are of the same dimension, Condition \ref{cond:manycovers} holds, even with $m=0$, and, therefore, Theorem \ref{thm:main_arb_fields} holds over $k$. 
 Finally, the case of arbitrary $G$ can be reduced to the above assumption $Z(G)=\{1\}$ by elementary means, using that every finite group  is a quotient of a group with trivial center. Concretely, for a group $G$ of order $n$, choose any non-abelian simple group $S$ and consider the wreath product $\Gamma:= S\wr G =S^n \rtimes G$. 
 Then $\Gamma$ clearly has trivial center. Furthermore, the conjugates of a given complement $G\le \Gamma$ of $S^n$ generate all of $\Gamma$. Therefore, if $(r,{\bf C})$ is a  ramification type of $G$ with $\mathcal{H}_{G,r}({\bf C})(\overline{k})\ne \emptyset$, then for a suitable multiple $R$ of $r$, there exists a ramification type $(R, {\bf \tilde{C}})$ of $\Gamma$ projecting onto a repetition of ${\bf C}$, and such that $\mathcal{H}_{\Gamma, R}(\tilde{\bf C})(k)\ne \emptyset$. As above, the equivalence classes of $k$-regular covers of group $\Gamma$ and ramification type (any arbitrarily long repetition of) ${\bf \tilde{C}}$ are Zariski-dense in at least one connected component, thus fulfilling Condition \ref{cond:manycovers}. But by the choice of ${\bf \tilde{C}}$, for any such cover, the subcover with group $G$ is a $k$-regular cover with the same branch point set, thus yielding Condition \ref{cond:manycovers} also for $G$.
\end{proof}

\begin{proof}[Proof of Theorem \ref{thm:main-general-fields}{\rm (a-2)}]

As a first example, let $G$ be an elementary abelian $2$-group. Since $G$ is abelian, every tuple $(C_1,\ldots,C_R)$ of conjugacy classes in $G$ with non-empty Nielsen class is a {\it rigid} tuple (in the sense of, e.g., \cite[Chapter I, \S4]{MM18}).

Furthermore, since all non-identity elements of $G$ are of order $2$, every conjugacy class is trivially {\it rational} (i.e., unchanged if taken to a power relatively prime to the order of its elements). It then follows from the rigidity method that, for every choice $(t_1,\ldots,t_R)$ of $R$ distinct points in $\mathbb{P}^1(k)$, there exists a Galois cover of $\mathbb{P}^1$, defined over $k$, with inertia canonical invariant $(C_1,\ldots,C_R)$ and branch points $(t_1,\ldots,t_R)$. In particular, the set of all these covers cannot be obtained by a set of defining equations of dimension $<R$. Hence, $G$ fulfills Condition \ref{cond:manycovers} over $k$. The assertion of Theorem \ref{thm:main_arb_fields} therefore holds for $G$ over an arbitrary field of characteristic zero.

Now, let $G$ be an arbitrary abelian group of even order. As before, all class tuples with non-empty Nielsen class are rigid. Let $(C_1,\ldots,C_R)$ be a class tuple of $G$ containing each element of order $\ge 3$ exactly once, and each element of order $2$ an arbitrary even number of times. This then yields a product-$1$ tuple generating $G$ and, if the branch points for each set of generators of a cyclic subgroup $\zz/d\zz$ are chosen appropriately to form a full set of conjugates (under the action of ${\textrm{Gal}}(\qq(e^{2i\pi/d})/\qq$)) in $k(e^{2i\pi/d})$, then the associated ramification type is a rational ramification type, implying that there is again a $k$-regular Galois cover with inertia canonical invariant $(C_1,\ldots,C_R)$ and with the prescribed branch point set. Note that the branch points for elements of order $2$ are still allowed to be chosen freely in $k$. Since there are less than $|G|$ other branch points, it follows as above that the set of Galois covers with these ramification data cannot be obtained by a set of defining equations of dimension $\le R-|G|$. Therefore again, Condition \ref{cond:manycovers} is fulfilled. 
\end{proof}

\begin{proof}[Proof of Theorem \ref{thm:main-general-fields}{\rm (a-3)}] 
Let $G=A\times H$, where $A$ is an abelian group of even order and $H$ is any non-solvable group which occurs as a regular Galois group over $k$. We use the non-solvability assumption only to obtain that there are no Galois covers of genus $\le 1$ with group $G$. Take a tuple $(C_1,\ldots,C_R)$ of classes of $A\le G$ as in the proof of Theorem \ref{thm:main-general-fields}{\rm (a-2)}, and prolong it by a fixed tuple $(C_{R+1},\ldots,C_S)$ of classes which occurs as some ramification type for $H$ over $k$. With the appropriate choice of branch point set for the $H$-cover, we obtain a $k$-regular Galois cover with group $A\times H$ where, once again, the branch points with involution inertia in $A$ can be chosen freely (outside of the fixed branch points of the $H$-cover). Increasing $R$ as in the proof of Theorem \ref{thm:main-general-fields}{\rm (a-2)} (whilst fixing $(C_{R+1},\ldots,C_S)$), we again obtain that Condition \ref{cond:manycovers}(a) is fulfilled, and so Theorem \ref{thm:main_arb_fields} holds over $k$ for Galois covers of genus $\ge 2$. As there is no Galois cover of group $G$ and genus $\le 1$, Theorem \ref{thm:main1}(b) holds for $G$ over all fields of characteristic $0$.
\end{proof}

\begin{proof}[Proof of Theorem \ref{thm:main-general-fields}{\rm (a-4)}] 
Let $M_{g,d}$ be the moduli space of simply branched covers (i.e., all non-trivial inertia groups are generated by transpositions) of degree $d$ and genus $g$. It is known (see \cite{AC81}) that $M_{g,5}$ is unirational for all $g\ge 6$ and, in fact, this holds even over the smallest field of definition $\mathbb{Q}$ (and, hence, over all fields of characteristic zero).

Note that $M_{g,5}$ parameterizes Galois covers in ${\sf{H}}_{S_5, 8+2g}({\bf{C}})$ with ${\bf{C}}=(C_1,\ldots,C_{8+2g})$, where each $C_i$ is the class of transpositions. This space is of dimension $8+2g$, and unirationality (over $k$) implies that its function field is of finite index in some $k(T_1,\ldots,T_{8+2g})$ with independent transcendentals $T_i$. But, of course, every $k$-rational value of $(T_1,\ldots,T_{8+2g})$ then leads to a $k$-rational point on $M_{g,5}$ (and, thus, a cover defined over $k$), and the set of such $k$-rational points on a unirational variety is always Zariski-dense.

This implies that Condition \ref{cond:manycovers}(a) is fulfilled and, as there is no Galois cover of group $S_5$ and genus $\le 1$, Theorem \ref{thm:main1}(b) holds for $S_5$ over all fields of characteristic $0$.
\end{proof}

\section{Proof of Theorem \ref{introthm:r+1}} \label{sec:RP2} 

Throughout this section, fix an algebraically closed field $k$ of characteristic $0$ and a Galois cover $f:X\ra \mP^1_k$ with group $G$ and ramification type $(R,\bf C)$ for $R\geq 4$.  Assume $G \not \subset \PGL_2(\mathbb C)$. Let ${\bf t}=\{t_1,\ldots,t_R\}$ be the branch point set of $f$. Let $e_f(t_0)$ denote the ramification index of $t_0\in\mP^1(k)$ under $f$, and set $e_i=e_f(t_i)$ for $i=1,\ldots,R$. Given $T_0\in k(U)\setminus k$ and $t_0\in \mathbb P^1(k)$, let $e(q|t_0)$ denote the ramification index under $T_0$ of $q\in T_0^{-1}(t_0)$. The proof is based on the following estimate on the branch point number of a pullback, which strengthens the bounds in \cite[Theorem 3.1(b-2)]{Deb18}. Recall that for  $T_0\in H_{f,k}$, the fiber product $X\times_{f,T_0}\mP^1_k$ is irreducible, cf.\ Section \ref{ssec:basics_2}.

\begin{lemma}\label{lem:rT0} 
Let $T_0\in H_{f,k}$ be of degree $n$. 
Let $a_i$ be the number of preimages $q\in T_0^{-1}(t_i)$ with $e_{i}\divides e(q|t_i)$ for $i=1,\ldots,R$, and $U_{T_0,f}$ the set of points $q\in\mP^1(k)$ such that $e(q|t_0)\neq e_f(t_0)$ for $t_0=T_0(q)$. Then the branch point number $R_{T_0}$ of  the pullback $f_{T_0}$ is at least 
$$R_{T_0}\geq  (R-4)n + 4 + \sum_{i=1}^R(e_{i}-2)a_i + \sum_{q\in U_{T_0,f}}\bigl(e(q|t_0)-1\bigr).$$
Moreover, equality holds if and only if $T_0$ is unramified away from ${\bf t}$ and its ramification indices over $t_i$ are either $e_{i}$ or not divisible by  $e_{i}$, for $i=1,\ldots,R$. 
\end{lemma}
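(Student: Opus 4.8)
The plan is to compute $r_{T_0}$ \emph{exactly} by playing the Riemann--Hurwitz formula for $T_0$ against the fiberwise structure of the pullback, and then to discard two manifestly non-negative error terms; the asserted inequality and its equality case then fall out. First I would pin down the branch locus of $f_{T_0}$. By Abhyankar's lemma the normalized fiber product $f_{T_0}$ is ramified over a point $q$ precisely when $e_f(t_0)\nmid e(q|t_0)$, where $t_0=T_0(q)$; in particular a branch point of $f_{T_0}$ can only lie over a branch point $t_i$ of $f$, since over any $t_0\notin{\bf t}$ one has $e_f(t_0)=1$, which divides everything. Writing $b_i$ for the number of $q\in T_0^{-1}(t_i)$ with $e_i\nmid e(q|t_i)$, this gives $r_{T_0}=\sum_{i=1}^r b_i$, and the set $U_{T_0,f}$ is exactly this branch locus, so that $\sum_{q\in U_{T_0,f}}(e(q|t_0)-1)=\sum_{i=1}^r\sum_{q\in B_i}(e(q|t_i)-1)=:Q$, where $B_i$ denotes the set of these $b_i$ preimages over $t_i$.

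Next I would set up two bookkeeping identities. For each $i$, split $T_0^{-1}(t_i)$ into the $a_i$ preimages with $e_i\mid e(q|t_i)$ (call them $A_i$, each contributing $e(q|t_i)\ge e_i$ to the degree) and the $b_i$ preimages in $B_i$. Reading off the fiber degree $n=\sum_{q}e(q|t_i)$ and setting $P_i:=\sum_{q\in A_i}(e(q|t_i)-e_i)\ge 0$ for the excess ramification at the divisible preimages and $Q_i:=\sum_{q\in B_i}(e(q|t_i)-1)$, one gets $b_i=n-e_ia_i-P_i-Q_i$. Separately, Riemann--Hurwitz for the genus-$0$ map $T_0$ reads $2n-2=\sum_{i}\bigl(P_i+a_i(e_i-1)+Q_i\bigr)+E_{\mathrm{out}}$, where $E_{\mathrm{out}}:=\sum_{T_0(q)\notin{\bf t}}(e(q)-1)\ge0$ collects the ramification of $T_0$ away from ${\bf t}$ (which, as observed, contributes no branch points of $f_{T_0}$).

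Finally I would combine these. Summing $b_i=n-e_ia_i-P_i-Q_i$ yields $r_{T_0}=rn-\sum_i e_ia_i-P-Q$ with $P=\sum_iP_i$; then substituting $4n$ from twice the Riemann--Hurwitz relation and collapsing the coefficient of $a_i$ (here $2(e_i-1)-e_i=e_i-2$) gives the exact equality
\[
r_{T_0}=(r-4)\,n+4+\sum_{i=1}^r(e_i-2)\,a_i+Q+\bigl(P+2E_{\mathrm{out}}\bigr).
\]
Since $P\ge0$ and $E_{\mathrm{out}}\ge0$, dropping the last bracket gives the bound, with equality if and only if $P=0$ and $E_{\mathrm{out}}=0$ --- that is, exactly when $T_0$ is unramified off ${\bf t}$ and every divisible preimage over $t_i$ has ramification index exactly $e_i$ (equivalently, over each $t_i$ the indices are either $e_i$ or not divisible by $e_i$), which is the asserted equality criterion. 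I expect the one delicate point to be the correct identification of the branch locus and local ramification of $f_{T_0}$ via Abhyankar's lemma: one must recognize that over-ramified divisible preimages and \emph{all} ramification of $T_0$ away from ${\bf t}$ feed only into the non-negative slack $P+2E_{\mathrm{out}}$ and never into $r_{T_0}$ itself. Once this is in place, the estimate is a bounded accounting of fiber degrees against Riemann--Hurwitz and requires no further input.
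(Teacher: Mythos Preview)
Your approach is essentially the paper's: both identify $r_{T_0}=\sum_i b_i$ via Abhyankar's lemma and combine the Riemann--Hurwitz relation for $T_0$ with the fiber-degree identities $n=\sum_{q\in T_0^{-1}(t_i)}e(q|t_i)$. The paper chains two inequalities drawn from these two sources and adds them; you instead distill the single exact identity
\[
r_{T_0}=(r-4)\,n+4+\sum_{i=1}^r(e_i-2)a_i+Q+P+2E_{\mathrm{out}}
\]
and then drop the non-negative slack $P+2E_{\mathrm{out}}$. This is a tidy repackaging of the same computation and makes the equality case immediate.

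One step is not right as written. Your claim that ``the set $U_{T_0,f}$ is exactly this branch locus'' does not match the definition in the lemma, which asks for $e(q|t_0)\neq e_f(t_0)$, not $e_f(t_0)\nmid e(q|t_0)$. Any preimage $q\in T_0^{-1}(t_i)$ whose ramification index is a \emph{proper} multiple of $e_i$, and any ramification point of $T_0$ lying over some $t_0\notin{\bf t}$, belongs to $U_{T_0,f}$ as defined but is not a branch point of $f_{T_0}$. In fact the lemma's inequality is false under the literal ``$\neq$'' reading: with $r=3$, all $e_i=7$, $G=\Zz/7\Zz$, and $T_0(z)=z^{14}$ sending $0,\infty$ to $t_1,t_2$, one finds $r_{T_0}=14$ while the asserted lower bound equals $26$. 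The paper's first displayed Riemann--Hurwitz inequality in its own proof has the same defect, so this is a slip in the statement of the lemma (the intended condition is $e_f(t_0)\nmid e(q|t_0)$, i.e.\ $U_{T_0,f}=\bigcup_i B_i$) rather than a flaw in your method; what you have correctly established is the bound with your $Q$ in place of $\sum_{q\in U_{T_0,f}}(e(q|t_0)-1)$, and your equality criterion then agrees verbatim with the one stated.
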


\begin{proof} 
Let $b_i$ denote the number of preimages $q$ in $T_0^{-1}(t_i)$ such that  $e(q|t_i)$ is not divisible by $e_{i}$, for $i=1,\ldots,R$. 
Note that, since $X\times_{f,T_0}\mP^1_k$ is irreducible, Abhyankar's lemma implies
\begin{equation}\label{equ:rT0}
R_{T_0} = b_1 + \cdots + b_R.
\end{equation}
By the Riemann--Hurwitz formula for $T_0$, we have
\begin{equation}\label{equ:RH} 2n-2 = \sum_{t_0\in\mP^1}\sum_{q\in T_0^{-1}(t_0)}(e(q|t_0)-1) \geq \sum_{q\in U_{T_0,f}}\bigl(e(q|t_0)-1\bigr)+  \sum_{i=1}^R (e_{i}-1)a_i,
\end{equation}
with equality if and only if $e(q|t_i)$ is either $e_{i}$ or non-divisible by $e_{i}$, for all points $q\in T_0^{-1}(t_i), i=1,\ldots,R$. The same Riemann--Hurwitz formula also implies $$2n-2\geq \sum_{i=1}^R(n-a_i-b_i) = Rn -\sum_{i=1}^R(a_i+b_i),$$
with equality if and only if $T_0$ is unramified away from $\bf t$. Combined with \eqref{equ:RH}, this gives
\begin{align*} 2n -2 & \geq Rn - \sum_{i=1}^R(a_i+b_i) \\
& \geq Rn + \sum_{i=1}^R(e_{i}-2)a_i + \sum_{q\in U_{T_0,f}}\bigl(e(q|t_0)-1\bigr) - (2n-2) - \sum_{i=1}^Rb_i.
\end{align*}
Combined with \eqref{equ:rT0}, this gives
$$R_{T_0} = \sum_{i=1}^Rb_i \geq (R-4)n + 4 + \sum_{i=1}^R(e_{i}-2)a_i + \sum_{q\in U_{T_0,f}}\bigl(e(q|t_0)-1\bigr),$$
with equality if and only if $T_0$ is unramified away from ${\bf t}$ and every ramification index $e(q|t_i)$, for $q\in T_0^{-1}(t_i)$, $i=1,\ldots,R$, which is divisible by $e_i$ is equal to $e_i$. 
\end{proof} 

Let $E$ denote the multiset $\{e_1,\ldots,e_R\}$. 
Throughout the proof below, we assume $T_0$ is in the Hilbert subset $H_{f,k}$, and hence that the fiber product $X\times_{f,T_0}\mP^1_k$ is irreducible.

\begin{proof}[Proof of Theorem \ref{introthm:r+1} when $E\neq \{2,2,2,3\}, \{2,2,2,4\}$]
The case where $f$ is of genus $1$ follows from Remark \ref{rem:genus1}, so henceforth we shall assume that the genus of $X$ is at least $2$. Let $(g_1, \ldots,g_R)$ be a tuple in the Nielsen class of ${\bf C}$, corresponding to $f$. As any permutation of the tuple $\bC=(C_1,\ldots,C_R)$ has a non-empty Nielsen class, without loss of generality, we may assume that $g_i$ is a branch cycle over $t_i$ and the orders $e_1,\ldots,e_R$ of $g_1,\ldots,g_R$ are ordered in decreasing order. This tuple can be modified to a tuple ($P_y$) $y, y^{-1}g_1,\ldots,g_R$ for any $y\neq g_1$. Such a tuple generates $G$ and has product $1$, giving a non-empty Nielsen class corresponding to a ramification type which we denote by $(R+1,{\bf D}_y)$. We will show that, for a suitable choice of $y\in G$, no  pullback $f_{T_0}$, along $T_0\in k(U)\setminus k$ of degree $n>1$,  is in ${\sf H}_{G,R+1}({\bf D}_y)(k)$. Since a cover  in ${\sf H}_{G,R+1}({\bf D}_y)(k)$ is a pullback of $f$ only if $y$ is a power of some element in $C_1,\ldots,C_R$, by varying $y$, we may  assume that every conjugacy class in $G$ is a power of one of $C_1,\ldots,C_R$. 

As in Lemma \ref{lem:rT0}, let $a_i$ (resp., $b_i$) be the number of preimages $q$ in $T_0^{-1}(t_i)$ such that the ramification index $e(q|t_i)$ is divisible by $e_{i}$ (resp., is not divisible by $e_i$) for $i=1,\ldots,R$. For $R\geq 6$, as $n\geq 2$, the lower bound on $R_{T_0}$ from Lemma \ref{lem:rT0} is $ \geq R+2$, as desired\footnote{We note that, for $R\geq 6$, this claim also follows for any choice of $y$ from \cite[Theorem 3.1(b-2)]{Deb18}, since the latter implies that every pullback of a $k$-cover in ${\sf{H}}_{G,R}(\bf C)$ has at least $R+2$ branch points, and hence is not in $\sH_{G,R+1}({\bf D}_y)(k).$}.

{\bf The case $R=5$:} We may assume $R_{T_0}=6$. By Lemma \ref{lem:rT0}, we have
\begin{equation*}\label{equ:r=5} 
n\leq 2 - \sum_{i=1}^R(e_i-2)a_i -\sum_{q\in U_{T_0,f}}\bigl(e(q|t_0)-1\bigr).
\end{equation*} 
As $n>1$, this forces $n=2$, $e(q|t_0)=1$ for all $q \in U_{T_0,f}$, and $\sum_{i=1}^R(e_i-2)a_i=0$. Thus, every ramification index of $T_0$ over $t_i$ is either $e_i$ or $1$, and $T_0$ is unramified away from ${\bf t}$. Since $T_0$ is of degree $2$, it ramifies over exactly two branch points with ramification index $2$. Hence, without loss of generality, we may assume $e_4=e_5=2$, so that the inertia canonical invariant of $f_{T_0}$ is ${\bf C}_{T_0}=(C_1,C_1,C_2,C_2,C_3,C_3)$. If $e_1>2$, by picking $y$ to be an involution, no cover 
in ${\sf H}_{G,R+1}({\bf D}_y)(k)$ is a pullback of $f$, as ${\bf D}_y$ has more conjugacy classes of involutions than ${\bf C}_{T_0}$ does. Similarly, if $e_1=2$ and $G$ contains an element $y$ of order $>2$, then every cover in ${\sf H}_{G,R+1}({\bf D}_y)(k)$  is not a pullback of $f$, as its Nielsen class has a conjugacy class of non-involutions. If $e_1=2$ and all elements of $G$ are of order $2$, then $G$ is an elementary abelian $2$-group generated by three conjugacy classes $C_1,C_2,C_3$. As $G \not \subset \PGL_2(\mathbb C)$, this forces $G\cong (\mathbb Z/2\mZ)^3$. In the latter case, we may choose $y$ to be an involution whose conjugacy class is different from $C_1,C_2,C_3, C_4,C_5$, so that no cover in ${\sf H}_{G,R+1}({\bf D}_y)(k)$ is a pullback of $f$. 

{\bf The case $R=4$:} Assume $R_{T_0}=5$. By Lemma \ref{lem:rT0}, we have
\begin{equation}\label{equ:r=4}
\sum_{i=1}^R(e_i-2)a_i + \sum_{q\in U_{T_0,f}}\bigl(e(q|t_0)-1\bigr) \leq 1.
\end{equation}
Let ${\bf t}_2$ (resp., ${\bf t}_{>2}$) denote the set of $t_i$ with $e_{i}=2$ (resp., $e_{i}>2$). Since every point $q\in T_0^{-1}(t_i)$, $t_i\in {\bf t}_2$, with ramification index $e(q|t_0)>2$ contributes at least $2$ to \eqref{equ:r=4}, we deduce that 
$e(q|t_i)=1$ or $2$ for all $q\in T_0^{-1}(t_i)$, $t_i\in {\bf t}_2$. On the other hand, for $t_i\in {\bf t}_{>2}$, \eqref{equ:r=4} gives $a_i=0$ with the possible exception of a single $t_{\iota}$ for which $a_{t_{\iota}}=1$ and $e_{\iota}=3$. 
Moreover, if such $\iota$ exists, then the contribution of the sum over $U_{T_0,f}$ in \eqref{equ:r=4} is $0$. Hence,
\begin{equation}\label{equ:cond-iota}
\begin{array}{l}\text{$e(q|t_i)=1$ for all points $q\in T_0^{-1}(t_i)$, $t_i\in {\bf t}_{>2}$, with the possible } \\
\text{exception of a single $q_\iota\in T_0^{-1}(t_\iota)$ where $e_\iota=e(q_\iota|t_\iota)=3$.}
\end{array}
\end{equation}
Otherwise, $a_i=0$ for all $t_i\in {\bf t}_{>2}$, in which case \eqref{equ:r=4} shows that  
\begin{equation}\label{equ:cond-eta}
\begin{array}{l} 
\text{$e(q|t_i)=1$ for all points $q\in T_0^{-1}(t_i)$, $t_i\in {\bf t}_{>2}$, with the possible } \\
\text{exception of a single $q_\eta\in T_0^{-1}(t_\eta)$ where $e(q_\eta|t_\eta)=2\neq e_\eta$.}
\end{array}
\end{equation} 

If $G$ is of odd order, \eqref{equ:cond-iota} and \eqref{equ:cond-eta} force $T_0$ to have at most one ramification point, which, by the Riemann--Hurwitz formula, forces a contradiction to $n>1$. Henceforth, assume $G$ is of even order. 

Let $a=|{\bf t}_{>2}|$. Since the genus of $X$ is more than $1$ and $R=4$,  the Riemann--Hurwitz formula for $f$ implies $a\geq 1$. Assume first $a>1$. In this case, we let $y\in G$ be an involution. The number of elements of order $>2$ in $(P_y)$ is $a$ or $a-1$. 
In case there exists $\iota$ as above, \eqref{equ:cond-iota} forces every point $t_i\in {\bf t}_{>2}\setminus \{t_\iota\}$ to have at least three unramified preimages and hence forces $f_{T_0}$ to have at least $3(a-1)$ branch points with ramification index $>2$. As $3(a-1)>a$ for $a>1$, the pullback $f_{T_0}$ does not not have ramification type $(R+1,{\bf D}_y)$. The same argument applies if  \eqref{equ:cond-eta} holds and $n\geq 3$. If $n=2$ and \eqref{equ:cond-eta} holds, then $f_{T_0}$ has at least $2(a-1)+1$ branch points with ramification index $>2$. Similarly, $2(a-1)+1>a$, and hence $f_{T_0}$ does not have ramification $(R+1,{\bf D}_y)$. 
 
Henceforth, assume $a=1$, that is, $E=\{e,2,2,2\}$. Note that $e> 4$ by assumption. Condition \eqref{equ:cond-iota} does not hold since $e>3$, and hence \eqref{equ:cond-eta} does. The latter implies that the inertia canonical conjugacy classes of $f_{T_0}$ over points in $T_0^{-1}(t_1)$ are either $C_1$ or $C_1^2$, hence of order $e$ or $e/2$. Thus, if we pick $y$ to be of order different from $e$ and $e/2$, then the only element of $(P_y)$ that can appear in such conjugacy class is $yx_1$. Thus, \eqref{equ:cond-eta} implies: 
\begin{equation}\label{equ:cond-eta2}
\begin{array}{l} 
\text{$T_0^{-1}(t_1)$ consists of a single point $q_\eta$ with ramification $e(q_\eta|t_1)=2$ such that} \\ 
\text{the inertia canonical invariant of $f_{T_0}$ over $q_\eta$ is the conjugacy class of $yx_1$.}
\end{array}
\end{equation}
The latter then has to be of order $\widetilde e=e/2$ or $e$. 

At first, consider the case where the conjugacy classes $C_2,C_3,C_4$ do not coincide. Without loss of generality, we may then assume that $C_2$ is different from $C_3$ and $C_4$. Picking $y=x_3$, \eqref{equ:cond-eta2} implies that  $n=2$, and $(R+1,{\bf D}_y)$ is a ramification type consisting of conjugacy classes of orders $2,\widetilde e,2,2,2$, with $\widetilde e=e$ or $e/2$. In particular, $\widetilde e>2$ and $C_2$ appears at most once in ${\bf D}_y$. Since $n=2$ and $f_{T_0}$ is assumed to have $5$ branch points, $T_0$ has to ramify over exactly one of the places $t_2,t_3,t_4$, say $t_k$. If $k=2$, then $C_2$ does not appear in the ramification type of $f_{T_0}$, contradicting its appearance in ${\bf D}_y$. If $k=3$ or $4$, then $t_2$ is unramified under $T_0$ and hence $C_2$ appears at least twice in the ramification type of $f_{T_0}$, but only once in ${\bf D}_y$, contradiction.  

Now, consider the case $C_2=C_3=C_4$. Note that, since $G$ is non-cyclic, $C_2$ is not a power of $C_1$. Assume next that $e$ is even. We may then pick $y$ to be an involution which is a power of $x_1$. As $y$ is not of order $e$ or $e/2$, we get $y\not\in C_1\cup C_1^2\cup C_2$, contradicting that those are the only conjugacy classes that may appear in the ramification type of $f_{T_0}$. Next, assume $e$ is odd, and pick a prime $p$ dividing $e$. If $p=e$, then $G$ is solvable by Burnside's theorem. Letting $N$ be a minimal normal subgroup of $G$, it follows that $N$ contains exactly one of $C_1$ and $C_2$. The product $1$ relation then gives a contradiction in $G/N$. Thus, we may assume $p$ is a proper divisor of $e$. In this case, if we pick $y$ to be a power of $x_1$ of order $p$, then \eqref{equ:cond-eta2} implies that the only conjugacy classes appearing in the ramification type of $f_{T_0}$ are $C_1,C_1^2$ or $C_2$, neither of which contains $y$, contradiction. This concludes the proof in the case $E\neq \{2,2,2,3\}, \{2,2,2,4\}$.
\end{proof}

\begin{remark}\label{rem:magma} 
In the following, we shall use Magma for computations with small order groups. More specifically, we use the command ExtensionsOfElementaryAbelianGroup to run over extensions of a given group by an elementary abelian group.
\end{remark}
 
Finally, we show that, if $E$ is $\{3,2,2,2\}$ or $\{4,2,2,2\}$, then either $G\subseteq \PGL_2(\mathbb C)$ or there is no group $G$ whose maximal conjugacy classes are the classes of a product $1$ tuple $x_1,\ldots,x_4$ with orders in $E$. Since, in both cases, $|G|$ is divisible by at most two primes, $G$ is solvable by Burnside's theorem. Let $N$ be a minimal normal subgroup of $G$, so that $N\cong (\mZ/2\mZ)^u$ or $(\mZ/3\mZ)^u$, for $u\geq 1$. 

\vspace{2mm}

\noindent {\bf Case \{3,2,2,2\}:} If $N\cong (\mZ/3\mZ)^u$, the images of $x_2,x_3,x_4$ in $G/N$ remain of order $2$ and hence $G/N\cong (\mZ/2\mZ)^2$. Since, moreover, $G/N$ acts transitively on the $\mZ/3\mZ$ subgroups of $N$, it follows that $u=1$ or $2$.  A check using Magma (see Remark \ref{rem:magma}) shows that all such group extensions $G$ contain an element of order~$6$. 

In the case $N\cong (\mZ/2\mZ)^u$, the subgroup $N$ contains exactly one of the conjugacy classes $C_2,C_3,C_4$ since, otherwise, we get a contradiction to the product $1$ relation in $G/N$. Since $G/N$ is generated by three elements of orders $3,2$ and $2$ with product $1$, it is isomorphic to $S_3$. As $G/N$ acts transitively on the $\mZ/2\mZ$ copies in $N$, we have $u=1$ or $2$. Once again, a Magma check shows that all such group extensions $G$ contain an element of order $4$ or $6$, contradicting that $C_i$, $i=1,\ldots,4$ are the only maximal ones.

\vspace{2mm}

\noindent {\bf Case \{4,2,2,2\}:} In this case, $G$ is a $2$-group, and $N\cong (\mZ/2\mZ)^u$. The conjugacy classes of involutions in $G$ are $C_1^2,C_2,C_3$, and $C_4$. If $N$ does not contain $C_1^2$, then the product one relation in $G/N$ implies that $N$ contains exactly one of the conjugacy classes $C_2,C_3,C_4$. In this case, $G/N$ is dihedral of order $8$, acting transitively on the $\mZ/2\mZ$ copies in $N$. Hence, $u=1$ or $2$. A Magma check (see Remark \ref{rem:magma}) shows that such a $G$ either contains an element of order $8$, or  has more than four conjugacy classes of involutions, or has more than two conjugacy classes of elements of order $4$ (in which case there is more than one conjugacy class of cyclic subgroups of order $4$). 

If $N$ contains $C_1^2$, then all elements of $G/N$ are involutions, and hence $G/N$ is an elementary abelian $2$-group. Since we may assume $N$ is a proper subgroup of $G$, the product $1$ relation in $G/N$ implies that $N$ contains one or two of the conjugacy classes $C_2,C_3,C_4$. In the former case, $G/N$ is a $2$-group acting transitively on involutions in $N$, forcing $u=1$. In this case, a Magma check shows that, for such group extensions, either the number of conjugacy classes of involutions is more than $4$ or the number of conjugacy classes of order $4$ elements is more than $2$. If $N$ contains two of $C_2,C_3,C_4$, then $G/N\cong \mZ/2\mZ$. Thus, by minimality of $N$, we get that $u\leq 2$ and hence that $G$ is isomorphic to a subgroup of $\PGL_2(\mathbb C)$.

\bibliography{Debes_et_al}

\begin{thebibliography}{DKLN20}

\bibitem[AC81]{AC81}
Enrico Arbarello and Maurizio Cornalba.
\newblock Footnotes to a paper of {B}eniamino {S}egre: ``{O}n the modules of
  polygonal curves and on a complement to the {R}iemann existence theorem''\
  ({I}talian) [{M}ath. {A}nn. {\bf 100} (1928), 537--551;\ {J}buch {\bf 54},
  685]. {T}he number of $g^{1}_{d}$'s on a general $d$-gonal curve, and the
  unirationality of the {H}urwitz spaces of $4$-gonal and $5$-gonal curves.
\newblock {\em Math. Ann.}, 256(3):341--362, 1981.

\bibitem[BSF13]{BSF13}
Lior Bary-Soroker and Arno Fehm.
\newblock Open problems in the theory of ample fields.
\newblock In {\em Geometric and differential {G}alois theories}, volume~27 of
  {\em S\'emin. Congr.}, pages 1--11. Soc. Math. France, Paris, 2013.

\bibitem[DD97a]{DD97b}
Pierre D{\`e}bes and Bruno Deschamps.
\newblock The regular inverse {G}alois problem over large fields.
\newblock In {\em Geometric {G}alois actions, 2}, volume 243 of {\em London
  Math. Soc. Lecture Note Ser.}, pages 119--138. Cambridge Univ. Press,
  Cambridge, 1997.

\bibitem[DD97b]{DD97a}
Pierre D{\`e}bes and Jean-Claude Douai.
\newblock Algebraic covers: field of moduli versus field of definition.
\newblock {\em Ann. Sci. \'Ecole Norm. Sup. (4)}, 30(3):303--338, 1997.

\bibitem[D{\`e}b99]{Deb99b}
Pierre D{\`e}bes.
\newblock Density results for {H}ilbert subsets.
\newblock {\em Indian J. Pure Appl. Math.}, 30(1):109--127, 1999.

\bibitem[D{\`e}b01]{Deb01a}
Pierre D{\`e}bes.
\newblock M\'ethodes topologiques et analytiques en th\'eorie inverse de
  {G}alois : th\'eor\`eme d'existence de {R}iemann. ({F}rench).
\newblock In {\em Arithm\'etique de rev\^etements alg\'ebriques
  ({S}aint-{\'E}tienne, 2000)}, volume~5 of {\em S\'emin. Congr.}, pages
  27--41. Soc. Math. France, Paris, 2001.

\bibitem[D{\`e}b17]{Deb17}
Pierre D{\`e}bes.
\newblock On the {M}alle conjecture and the self-twisted cover.
\newblock {\em Israel J. Math.}, 218(1):101--131, 2017.

\bibitem[D{\`e}b18]{Deb18}
Pierre D{\`e}bes.
\newblock Groups with no parametric {G}alois realizations.
\newblock {\em Ann. Sci. \'{E}c. Norm. Sup\'{e}r. (4)}, 51(1):143--179, 2018.

\bibitem[DKLN18]{DKLN18}
Pierre D\`{e}bes, Joachim K\"{o}nig, Fran\c{c}ois Legrand, and Danny Neftin.
\newblock Rational pullbacks of {G}alois covers.
\newblock {\em {M}anuscript}, 2018.
\newblock arXiv:1807.01937.

\bibitem[DKLN20]{DKLN1-part-two}
Pierre D\`{e}bes, Joachim K\"{o}nig, Fran\c{c}ois Legrand, and Danny Neftin.
\newblock On parametric and generic polynomials with one parameter.
\newblock 2020.
\newblock To appear in Journal of Pure and Applied Algebra.

\bibitem[DL13]{DL13}
Pierre D\`ebes and Fran\c{c}ois Legrand.
\newblock Specialization results in {G}alois theory.
\newblock {\em Trans. Amer. Math. Soc.}, 365(10):5259--5275, 2013.

\bibitem[FJ08]{FJ08}
Michael~D. Fried and Moshe Jarden.
\newblock {\em Field arithmetic}.
\newblock Ergebnisse der Mathematik und ihrer Grenzgebiete. 3. Folge. A
  {S}eries of Modern Surveys in Mathematics [Results in Mathematics and Related
  Areas. 3rd Series. A Series of Modern Surveys in Mathematics], 11.
  Springer-Verlag, Berlin, third edition, 2008.
\newblock Revised by Jarden. xxiv+792 pp.

\bibitem[Fri77]{Fri77}
Michael~D. Fried.
\newblock Fields of definition of function fields and {H}urwitz families-groups
  as {G}alois groups.
\newblock {\em Comm. Algebra}, 5(1):17--82, 1977.

\bibitem[FV91]{FV91}
Michael~D. Fried and Helmut V\"olklein.
\newblock The inverse {G}alois problem and rational points on moduli spaces.
\newblock {\em Math. Ann.}, 290(4):771--800, 1991.

\bibitem[Gro64]{Gro64}
Alexander Grothendieck.
\newblock El\'ements de g\'eom\'etrie alg\'ebrique. {I}{V}. {E}tude locale des
  sch\'emas et des morphismes de sch\'emas. {I}. ({F}rench).
\newblock {\em Inst. Hautes Etudes Sci. Publ. Math.}, 20, 1964.
\newblock 259 pp.

\bibitem[Jar11]{Jar11}
Moshe Jarden.
\newblock {\em Algebraic patching}.
\newblock Springer {M}onographs in {M}athematics. Springer, Heidelberg, 2011.
\newblock xxiv+290 pp.

\bibitem[JLY02]{JLY02}
Christian~U. Jensen, Arne Ledet, and Noriko Yui.
\newblock {\em Generic polynomials. Constructive Aspects of the Inverse Galois
  Problem}.
\newblock {M}athematical {S}ciences {R}esearch {I}nstitute {P}ublications, 45.
  Cambridge University Press, 2002.
\newblock x+258 pp.

\bibitem[Jor72]{Jor72}
Camille Jordan.
\newblock Recherches sur les substitutions.
\newblock {\em J. Liouville}, 17:351--367, 1872.

\bibitem[KL18]{KL18}
Joachim K\"onig and Fran\c{c}ois Legrand.
\newblock Non-parametric sets of regular realizations over number fields.
\newblock {\em J. Algebra}, 497:302--336, 2018.

\bibitem[KLN19]{KLN19}
Joachim K\"onig, Fran\c{c}ois Legrand, and Danny Neftin.
\newblock On the local behavior of specializations of function field
  extensions.
\newblock {\em Int. Math. Res. Not. IMRN}, 2019(9):2951--2980, 2019.

\bibitem[MM18]{MM18}
Gunter Malle and B.~Heinrich Matzat.
\newblock {\em {I}nverse {G}alois theory}.
\newblock {S}pringer {M}onographs in {M}a\-the\-ma\-tics. Springer, Berlin,
  2018.
\newblock Second edition. xvii+532 pp.

\bibitem[M{\"{u}}l02]{Mue02}
Peter M{\"{u}}ller.
\newblock Finiteness results for {H}ilbert's irreducibility theorem.
\newblock {\em Ann. Inst. Fourier (Grenoble)}, 52(4):983--1015, 2002.

\bibitem[Mum99]{Mum99}
David Mumford.
\newblock {\em The red book of varieties and schemes}, volume 1358 of {\em
  Lecture Notes in Mathematics}.
\newblock Springer-Verlag, Berlin, {S}econd, expanded edition, 1999.
\newblock Includes the Michigan lectures (1974) on curves and their Jacobians,
  With contributions by Enrico Arbarello. x+306 pp.

\bibitem[Pop94]{Pop94}
Florian Pop.
\newblock 1/2 {R}iemann existence theorem with {G}alois action.
\newblock In {\em Algebra and number theory (Essen, 1992)}, pages 193--218. de
  {G}ruyter, {B}erlin, 1994.

\bibitem[Pop96]{Pop96}
Florian Pop.
\newblock Embedding problems over large fields.
\newblock {\em Ann. of Math. (2)}, 144(1):1--34, 1996.

\bibitem[Pop14]{Pop14}
Florian Pop.
\newblock Little survey on large fields - old $\&$ new.
\newblock In {\em Valuation theory in interaction}, EMS Ser. Congr. Rep., pages
  432--463. Eur. Math. Soc., Z\"urich, 2014.

\bibitem[Sad99]{Sad99}
Bounab Sadi.
\newblock {\em Descente effective du corps de d\'efinition des rev\^etements}.
\newblock Th\`ese Universit\'e Lille 1, 1999.
\newblock Available at
  \url{https://ori-nuxeo.univ-lille1.fr/nuxeo/site/esupversions/73ccb929-17bf-4a20-abc6-0aac9072c8d3}.

\bibitem[Sil09]{Sil09}
Joseph~H. Silverman.
\newblock {\em The arithmetic of elliptic curves}, volume 106 of {\em Graduate
  Texts in Mathematics}.
\newblock Springer, Dordrecht, second edition, 2009.
\newblock xx+513 pp.

\bibitem[V{\"o}l96]{Vol96}
Helmut V{\"o}lklein.
\newblock {\em Groups as {G}alois groups. An introduction}, volume~53 of {\em
  Cambridge Studies in Advanced Mathematics}.
\newblock Cambridge University Press, Cambridge, 1996.
\newblock xviii+248 pp.

\end{thebibliography}
\bibliographystyle{alpha}

\end{document}